\newtheorem{theorem}{Theorem}[section]
\newtheorem{lemma}[theorem]{Lemma}
\newtheorem{proposition}[theorem]{Proposition}
\newtheorem{corollary}[theorem]{Corollary}
\theoremstyle{definition}
\newtheorem{definition}[theorem]{Definition}
\theoremstyle{remark}
\newtheorem{remark}[theorem]{Remark}
\numberwithin{equation}{section}
\begin{document}

\author[E. H. Benabdi and M. Barraa]{El Hassan Benabdi and Mohamed Barraa}

\title[Drazin invertibility of quaternionic linear operators]{Drazin invertibility of linear operators on quaternionic Banach spaces}

\address{Department of Mathematics, Faculty of Sciences Semlalia, University Cadi Ayyad, Marrakesh, Morocco
}
\email{elhassan.benabdi@gmail.com}
\email{barraa@uca.ac.ma}

\subjclass[2020]{46S10; 47A60; 47C15; 30G35}

\keywords{Drazin inverse; quaternionic Banach space; slice function; S-spectrum}

\begin{abstract}
Let $A$ be a right linear operator on a two-sided quaternionic Banach space $X$. The paper studies the Drazin inverse for right linear operators on a quaternionic Banach space. It is shown that if $A$ is Drazin invertible then the Drazin inverse of $A$ is given by $f(A)$ where $f$ is $0$ in an axially symmetric neighborhood of $0$ and $f(q) = q^{-1}$ in an axially symmetric neighborhood of the nonzero spherical spectrum of $A$. Some results analogous to the ones concerning the Drazin inverse of operators on complex Banach spaces are proved in the quaternionic context.
\end{abstract}\maketitle
\section{Introduction and preliminaires}
We denote by $\mathbb{H}$ the algebra of quaternions, introduced by Hamilton in 1843. An element $q$ of $\mathbb{H}$ is of the form
 $$q = a + b\mathrm{i} + c\mathrm{j} + d\mathrm{k}; a,b,c,d\in\mathbb{R}$$
where $\mathrm{i},\mathrm{j}$ and $\mathrm{k}$ are imaginary units. By definition, they satisfy
$$\mathrm{i}^2=\mathrm{j}^2=\mathrm{k}^2=\mathrm{i}\mathrm{j}\mathrm{k}=-1.$$
Given $q=a+b\mathrm{i}+c\mathrm{j}+d\mathrm{k}$, then
\begin{itemize}
\item[•] the conjugate quaternion of $q$ is $\bar{q}=a-b\mathrm{i}-c\mathrm{j}-d\mathrm{k}$;
\item[•] the norm of $q$ is $\vert q\vert=\sqrt{q\bar{q}}=\sqrt{a^2+b^2+c^2+d^2}$;
\item[•] the real and the imaginary parts of $q$ are respectively $\text{Re}(q):=\frac{1}{2}(q+\bar{q})=a$ and $\text{Im}(q):=\frac{1}{2}(q-\bar{q})=b\mathrm{i} + c\mathrm{j} + d\mathrm{k}$.
\end{itemize}
The unit sphere of imaginary quaternions is given by
$$\mathbb{S}=\{q\in\mathbb{H}:q^2=-1\}.$$
Let $p$ and $q$ be two quaternions. $p$ and $q$  are said to be conjugated, if there is $s\in\mathbb{H}\setminus\{0\}$ such that $p=sqs^{-1}$. The set of all quaternions conjugated with $q$, is equal to the 2-sphere 
$$[q] =\{\text{Re}(q)+\vert\text{Im}(q)\vert j : j \in \mathbb{S}\}=\text{Re}(q)+\vert\text{Im}(q)\vert\mathbb{S}.$$
For every $j \in \mathbb{S}$, denote by $\mathbb{C}_j$ the real subalgebra of $\mathbb{H}$ generated by $j$; that is, 
$$\mathbb{C}_j:=\{u+vj\in\mathbb{H}: u,v\in\mathbb{R}\}.$$
We say that $U\subseteq\mathbb{H}$ is axially symmetric if $[q] \subset U$ for every $q \in U$.\\

For a thorough treatment of the algebra of quaternions $\mathbb{H}$, the reader is referred, for instance, to \cite{34}.
\begin{definition}[{\cite[Definition 2.1.2]{31}}, Slice hyperholomorphic functions] 
Let $U\subseteq\mathbb{H}$ be an axially symmetric open set and let $\mathcal{U}=\{(u,v)\in\mathbb{R}^2: u+v\mathbb{S}\subset U\}$.\\
A function $f : U\rightarrow\mathbb{H}$ is called a left slice function if there exist two functions $f_0, f_1 : \mathcal{U}\rightarrow\mathbb{H}$ such that:
$$f(q) = f_0(u, v) + jf_1(u, v)\;\;\; \text{ for every } q = u + vj \in U$$
and if $f_0, f_1$ satisfy the compatibility conditions
\begin{equation}
f_0(u,-v) = f_0(u,v),\;\;\;\;\;\; f_1(u,-v) = -f_1(u,v).\label{3eq6}
\end{equation}
If in addition $f_0$ and $f_1$ satisfy the Cauchy-Riemann equations
\begin{equation}
\begin{split}
\frac{\partial}{\partial u}f_0(u, v)-\frac{\partial}{\partial v}f_1(u, v)= 0,\\
\frac{\partial}{\partial v}f_0(u, v)+\frac{\partial}{\partial u}f_1(u, v)= 0,\label{3eq7}
\end{split}
\end{equation}
then $f$ is called left slice hyperholomorphic. We denote the set of all left slice functions on $U$ by $\mathcal{SF}_L(U)$ and the set of all left slice hyperholomorphic functions on $U$ by $\mathcal{SH}_L(U)$.\\
A function $f : U\rightarrow\mathbb{H}$ is called a right slice function if there exist two functions $f_0, f_1 : \mathcal{U}\rightarrow\mathbb{H}$ such that:
$$f(q) = f_0(u, v) + f_1(u, v)j\;\;\; \text{ for every } q = u + vj \in U$$
and if $f_0, f_1$ satisfy the compatibility conditions $(\ref{3eq6})$. If in addition $f_0$ and $f_1$ satisfy
the Cauchy-Riemann equations $(\ref{3eq7})$, then $f$ is called right slice hyperholomorphic. We denote the set of all right slice functions on $U$ by $\mathcal{SF}_R(U)$ and the set of all right slice hyperholomorphic functions on $U$ by $\mathcal{SH}_R(U)$.\\
If $f$ is a left $($or right$)$ slice function such that $f_0$ and $f_1$ are real-valued, then $f$ is called intrinsic. The set of all intrinsic slice functions on $U$ will be denoted by $\mathcal{FN}(U)$ and the set of all intrinsic slice hyperholomorphic functions on $U$ will be denoted by $\mathcal{N}(U)$.
\end{definition}
\begin{lemma}[{\cite[Lemma 2.1.6]{31}}, Splitting lemma] Let $U\subseteq\mathbb{H}$ be an axially symmetric open set and let $i,j\in\mathbb{S}$ with $ij=-ji$. If $f \in\mathcal{SH}_L(U)$, then the restriction $f_j = f\vert_{ U\cap\mathbb{C}_j}$ satisfies
$$\frac{1}{2}\left(\frac{\partial}{\partial u}f_j(z)+j\frac{\partial}{\partial v}f_j(z)\right)= 0,$$
for all $z = u + vj \in U\cap\mathbb{C}_j$. Hence
$$f_j(z) = F_0(z) + F_1(z)i$$
with holomorphic functions $F_0,F_1 : U\cap\mathbb{C}_j\rightarrow\mathbb{C}_j.$\\
If $f \in\mathcal{SH}_R(U)$, then the restriction $f_j = f\vert_{U\cap\mathbb{C}_j}$ satisfies
$$\frac{1}{2}\left(\frac{\partial}{\partial u}f_j(z)+\frac{\partial}{\partial v}f_j(z)j\right)= 0,$$
for all $z = u + vj \in U\cap\mathbb{C}_j$. Hence
$$f_j(z) = F_0(z) +i F_1(z)$$
with holomorphic functions $F_0,F_1 : U\cap\mathbb{C}_j\rightarrow\mathbb{C}_j.$
\end{lemma}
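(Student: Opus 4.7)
The plan is to restrict to the slice $\mathbb{C}_j$, differentiate the defining representation of $f$ in the real parameters $u,v$, and then use the anti-commutation $ij = -ji$ to decompose $\mathbb{H}$ into the direct sum $\mathbb{C}_j \oplus \mathbb{C}_j i$ so that the resulting slice-wise equation separates into two classical Cauchy-Riemann systems.

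First, for $z = u + vj \in U \cap \mathbb{C}_j$ the left slice representation gives $f_j(z) = f_0(u,v) + j f_1(u,v)$. A short computation using $j^2 = -1$ (keeping $j$ on the left throughout) yields
\[
\partial_u f_j + j\, \partial_v f_j = (\partial_u f_0 - \partial_v f_1) + j(\partial_u f_1 + \partial_v f_0),
\]
and the right-hand side vanishes by the Cauchy-Riemann system (\ref{3eq7}). This is exactly the first displayed equation of the lemma (up to the harmless factor of $\tfrac12$).

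Next, the hypothesis $ij = -ji$ makes $\{1,j,i,ji\}$ an (orthogonal) real basis of $\mathbb{H}$, so every quaternion has a unique representation $a + bi$ with $a,b \in \mathbb{C}_j$. Applied pointwise this yields $F_0, F_1 : U \cap \mathbb{C}_j \to \mathbb{C}_j$ with $f_j = F_0 + F_1 i$, and real differentiability is inherited from $f_0, f_1$. Substituting into the slice-wise equation just established and using associativity to bring $i$ to the right while $j$ commutes with all $\mathbb{C}_j$-valued factors, one obtains
\[
(\partial_u F_0 + j \partial_v F_0) + (\partial_u F_1 + j \partial_v F_1)\, i = 0.
\]
Directness of the decomposition $\mathbb{H}=\mathbb{C}_j\oplus\mathbb{C}_j i$ forces each summand to vanish, and each resulting identity is precisely the Cauchy-Riemann condition for a $\mathbb{C}_j$-valued function of the variable $z=u+vj$; thus $F_0$ and $F_1$ are holomorphic.

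The right slice statement follows by the mirror computation: write $f_j = f_0 + f_1 j$, obtain $\partial_u f_j + \partial_v f_j\, j = 0$ by the same grouping argument, and use the decomposition $\mathbb{H} = \mathbb{C}_j \oplus i\,\mathbb{C}_j$. There is no substantive obstacle in the proof; the only point needing real care is the non-commutativity bookkeeping when the units $i,j$ are moved past each other and past $\mathbb{C}_j$-valued factors, and this is precisely where the hypothesis $ij = -ji$ is invoked so that left multiplication by $j$ respects the splitting into $\mathbb{C}_j$-components.
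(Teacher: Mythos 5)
The paper does not prove this lemma: it is quoted verbatim from \cite[Lemma 2.1.6]{31} as a preliminary, so there is no in-paper argument to compare against. Your proof is correct and is essentially the standard one from the cited reference: the computation $\partial_u f_j + j\,\partial_v f_j = (\partial_u f_0 - \partial_v f_1) + j(\partial_u f_1 + \partial_v f_0)$ reduces the slice equation to the Cauchy--Riemann system \eqref{3eq7}, and the direct decomposition $\mathbb{H}=\mathbb{C}_j\oplus\mathbb{C}_j i$ (valid precisely because $ij=-ji$ makes $\{1,j,i,ji\}$ a real basis) splits it into two scalar Cauchy--Riemann systems for the $\mathbb{C}_j$-valued components $F_0,F_1$, whose real differentiability is inherited through the continuous real-linear projections onto the two summands. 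The mirror argument for the right slice case with $\mathbb{H}=\mathbb{C}_j\oplus i\,\mathbb{C}_j$ is likewise correct.
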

\begin{theorem}[{\cite[Theorem 2.1.21]{31}}, Cauchy's integral theorem]
Let $U\subseteq\mathbb{H}$ be open, $j\in\mathbb{S}$ and $D_j \subset U\cap \mathbb{C}_j$ be a bounded open subset of the complex plane $\mathbb{C}_j$ with $\overline{D_j}  \subset U\cap \mathbb{C}_j$ such that $\partial D_j$ is a finite union of piecewise continuously differentiable Jordan curves. Then for all $f \in\mathcal{SH}_L(U)$ and all $g \in\mathcal{SH}_R(U)$
$$\int_{\partial D_j}g(s)ds_jf(s)=0,$$
where $ds_j = ds(-j)$.
\end{theorem}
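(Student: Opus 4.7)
The plan is to reduce the quaternionic line integral to a finite linear combination of classical complex line integrals over $\partial D_j$, each of which vanishes by the usual Cauchy integral theorem in the complex plane $\mathbb{C}_j$.

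First I would pick $i \in \mathbb{S}$ with $ij = -ji$ (such an $i$ exists because $\mathbb{S}$ contains a whole two-sphere of imaginary units orthogonal to $j$). By the Splitting Lemma applied to $f \in \mathcal{SH}_L(U)$, there exist holomorphic functions $F_0, F_1 : U \cap \mathbb{C}_j \to \mathbb{C}_j$ with $f_j(s) = F_0(s) + F_1(s) i$. Applied to $g \in \mathcal{SH}_R(U)$, it yields holomorphic $G_0, G_1 : U \cap \mathbb{C}_j \to \mathbb{C}_j$ with $g_j(s) = G_0(s) + i G_1(s)$. Both splittings hold pointwise on $U \cap \mathbb{C}_j$, in particular on $\overline{D_j}$.

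Next I would expand the integrand $g(s)\,ds_j\,f(s)$ along $\partial D_j$. The differential $ds$ takes values in $\mathbb{C}_j$ and therefore commutes with every $\mathbb{C}_j$-valued factor, while by definition $ds_j = -ds \cdot j$. Using this, together with $xj = jx$ for $x \in \mathbb{C}_j$ and the anticommutation $ij = -ji$, one can push every occurrence of $i$ and $j$ to the right of the $\mathbb{C}_j$-valued scalars $F_0, F_1, G_0, G_1$ and collect terms into the form
\[
g(s)\,ds_j\,f(s) \;=\; H_1(s)\,ds \;+\; H_2(s)\,ds\, i \;+\; H_3(s)\,ds\, j \;+\; H_4(s)\,ds\, ij,
\]
where each $H_k$ is a fixed $\mathbb{R}$-linear combination of the products $G_\alpha F_\beta$ ($\alpha,\beta \in \{0,1\}$), hence a $\mathbb{C}_j$-valued holomorphic function on $U \cap \mathbb{C}_j$.

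Integrating termwise over $\partial D_j$ gives
\[
\int_{\partial D_j} g(s)\,ds_j\,f(s) \;=\; \sum_{k=1}^{4} \Bigl( \int_{\partial D_j} H_k(s)\, ds \Bigr) c_k, \qquad c_k \in \{1,i,j,ij\}.
\]
Under the $\mathbb{R}$-algebra identification $\mathbb{C}_j \cong \mathbb{C}$, each scalar factor is the classical line integral of a function holomorphic on $U \cap \mathbb{C}_j$, taken over a boundary that is a finite union of piecewise $C^1$ Jordan curves enclosing $D_j$ with $\overline{D_j} \subset U \cap \mathbb{C}_j$. The classical Cauchy integral theorem forces each of these scalar integrals to vanish, and the desired identity follows. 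The only genuine obstacle is the careful noncommutative bookkeeping in the expansion step: one must consistently apply $ij=-ji$ and verify that, after collection, the four coefficients $H_k$ really do end up holomorphic in $s \in U \cap \mathbb{C}_j$; conceptually, however, the argument reduces entirely to the complex Cauchy theorem.
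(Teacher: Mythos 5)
The paper does not prove this statement---it is quoted from \cite[Theorem 2.1.21]{31} as a preliminary---so there is no internal proof to compare against; your strategy (split $f$ and $g$ via the Splitting Lemma and reduce to the classical Cauchy theorem on $\mathbb{C}_j$) is exactly the standard argument for this result. However, there is a genuine error in your expansion step. You claim that every occurrence of $i$ and $j$ can be pushed to the right of the $\mathbb{C}_j$-valued factors, so that the integrand becomes $\sum_k H_k(s)\,ds\,c_k$ with each $H_k$ an $\mathbb{R}$-linear combination of the holomorphic products $G_\alpha F_\beta$. This fails for the two terms coming from the summand $iG_1$ of $g_j$: since $ij=-ji$, for $z=u+vj\in\mathbb{C}_j$ one has $iz=\bar z\, i$, not $iz=zi$. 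Moving $i$ to the right therefore conjugates every $\mathbb{C}_j$-valued factor it passes, including the differential, so those terms come out as $\overline{G_1 F_\beta}\,\overline{ds}\,c_k$ up to sign---an \emph{antiholomorphic} coefficient against the conjugate differential---and the classical Cauchy theorem does not apply to them in the form you invoke. The very point you flag as the crux (``the four coefficients $H_k$ really do end up holomorphic'') is the point that is false.

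The conclusion survives after repairing the bookkeeping in either of two ways. Either do not move $i$ at all: writing $g_j=G_0+iG_1$ and $f_j=F_0+F_1i$, each of the four terms of $g(s)\,ds_j\,f(s)$ integrates to $a\bigl(\int_{\partial D_j}G_\alpha(s)F_\beta(s)\,ds\bigr)b$ with constant quaternions $a\in\{1,i\}$ on the left and $b\in\{j,ji\}$ on the right, and each middle integral vanishes by the classical Cauchy theorem because $G_\alpha F_\beta$ is holomorphic on a neighborhood of $\overline{D_j}$. Or, if you do push $i$ through, note that the conjugated integrals equal $\overline{\int_{\partial D_j}G_\alpha F_\beta\,ds}=\bar 0=0$. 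Either fix is one line, but as written the step is wrong, not merely unverified. (A minor quibble: the set of $i\in\mathbb{S}$ anticommuting with a fixed $j$ is a circle, not a two-sphere, though such $i$ of course exist.)
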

\begin{definition}[{\cite[Definition 2.1.23]{31}}]
We define the left slice hyperholomorphic Cauchy kernel as:
$$S_L^{-1}(s, q) :=-(q^2-2\text{Re}(s)q + \vert s\vert^2)^{-1}(q -\bar{s});\;\;\;\;\;\;\;\; q \notin [s],$$
and the right slice hyperholomorphic Cauchy kernel as:
$$S_R^{-1}(s, q) :=-(q -\bar{s})(q^2-2\text{Re}(s)q + \vert s\vert ^2)^{-1};\;\;\;\;\;\;\;\; q \notin [s].$$
\end{definition}
\begin{lemma}[{\cite[Lemma 2.1.27]{31}}]
Let $q, s\in\mathbb{H}$ with $s \notin [q]$.\\
The left slice hyperholomorphic Cauchy kernel $S_L^{-1}(s, q)$ is left slice hyperholomorphic in $q$ and right slice hyperholomorphic in $s$.\\
The right slice hyperholomorphic Cauchy kernel $S_R^{-1}(s, q)$ is left slice hyperholomorphic in $s$ and right slice hyperholomorphic in $q$.
\end{lemma}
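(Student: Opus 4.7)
The plan is to verify the four claims directly from the definition of slice hyperholomorphicity. The computation is organized around the algebraic identity
$$q^{2} - 2\,\text{Re}(s)\, q + \vert s\vert^{2} \;=\; (q - \text{Re}(s))^{2} + \vert\text{Im}(s)\vert^{2},$$
and the attendant commutation $[r, P] = 0$, where $r := q - \text{Re}(s)$ and $P := q^{2} - 2\,\text{Re}(s)\, q + \vert s\vert^{2}$. On the axially symmetric open set $\mathbb{H}\setminus[s]$ the polynomial $P$ is invertible, and hence $[r, P^{-1}] = 0$ as well.

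For the $q$-dependence with $s$ fixed, write $q = u + vj$. Since $P$ is a polynomial in $q$ with real coefficients, one has $P = A(u,v) + j B(u,v)$ with real-valued $A, B$, and the same is true for $P^{-1} = (A - jB)/(A^{2}+B^{2})$; in particular $P^{-1}$ is intrinsic slice hyperholomorphic in $q$. The factor $q - \bar{s} = (u - \bar{s}) + jv$ fits equally in left and in right slice form with $f_0(u,v) = u - \bar s$ and $f_1(u,v) = v$, and both (\ref{3eq6}) and (\ref{3eq7}) are immediate. Using the standard rule that left multiplication (respectively right multiplication) by an intrinsic slice hyperholomorphic function preserves left (respectively right) slice hyperholomorphicity, we conclude that $S_{L}^{-1}(s,q) = -P^{-1}(q-\bar{s})$ is left slice hyperholomorphic in $q$ and that $S_{R}^{-1}(s,q) = -(q-\bar{s}) P^{-1}$ is right slice hyperholomorphic in $q$.

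For the $s$-dependence with $q$ fixed, $P^{-1}$ is no longer intrinsic and one must verify the conditions by direct computation. Writing $s = u + vj$, so that $r = q - u$ and $P = r^{2} + v^{2}$, I would put
$$S_{L}^{-1}(s,q) \,=\, -P^{-1}(q - u + vj) \,=\, (-P^{-1}r) \,+\, (-v P^{-1})\, j$$
into right slice form $G_{0}(u,v) + G_{1}(u,v) j$. Compatibility (\ref{3eq6}) follows from the evenness of $P$ in $v$. Computing $\partial_{u} P^{-1} = 2\, r P^{-2}$ and $\partial_{v} P^{-1} = -2 v P^{-2}$, both equations of (\ref{3eq7}) collapse to the single identity $(r^{2} + v^{2}) P^{-2} = P^{-1}$, which is tautological since $[r, P] = 0$. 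The mirror argument, placing $r$ and $v$ on the left, shows that $S_{R}^{-1}(s,q)$ is left slice hyperholomorphic in $s$.

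The main obstacle I anticipate is bookkeeping: when $q$ and $s$ lie on distinct slices, the imaginary unit $j$ parametrising one variable does not commute with the quaternions arising from the other, and only real-valued quantities such as $u$, $v$, $\vert s\vert^{2}$ may be moved freely past quaternion factors. The one genuinely algebraic input, $[r, P] = 0$, is what makes the CR verification tractable, reducing each of the four identities to a one-line cancellation.
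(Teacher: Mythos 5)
This lemma is one of the quoted preliminaries from \cite[Lemma 2.1.27]{31}: the paper states it without proof, so there is no in-paper argument to compare yours against. Judged on its own, your verification is correct and is essentially the standard one. The two halves of your argument are sound: for the $q$-variable, $P(q)=q^2-2\mathrm{Re}(s)q+|s|^2$ is a polynomial with real coefficients, nonvanishing exactly off $[s]$, so $P^{-1}$ is intrinsic, and $q-\bar s$ is both left and right slice hyperholomorphic; multiplying on the correct side by the intrinsic factor then gives the claim. For the $s$-variable, your direct computation with $s=u+vj$, $r=q-u$, $P=r^2+v^2$ is right: the compatibility conditions follow from evenness in $v$, and with $\partial_u P^{-1}=2rP^{-2}$, $\partial_v P^{-1}=-2vP^{-2}$ (valid because $r$ commutes with $P$, hence with $P^{-1}$) the Cauchy--Riemann system reduces to $(r^2+v^2)P^{-2}=P^{-1}$, which is the identity $PP^{-2}=P^{-1}$. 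Two small points worth tightening in a final writeup: (a) the ``standard rule'' that an intrinsic slice hyperholomorphic left (resp.\ right) factor preserves left (resp.\ right) slice hyperholomorphicity is itself a lemma of \cite{31} not stated in this paper, so either cite it or note that the same direct computation you perform in the $s$-variable also disposes of the $q$-variable without invoking it; (b) you should say explicitly that the relevant domains $\mathbb{H}\setminus[s]$ (in $q$) and $\mathbb{H}\setminus[q]$ (in $s$) are axially symmetric open sets on which $P$ is invertible, since $P=0$ exactly on the $2$-sphere $[s]$, and that $q\notin[s]$ is equivalent to $s\notin[q]$.
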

\begin{definition}[{\cite[Definition 2.1.30]{31}}, Slice Cauchy domain]
An axially symmetric open set $U \subset\mathbb{H}$ is called a slice Cauchy domain if $U \cap\mathbb{C}_j$ is a Cauchy domain in $\mathbb{C}_j$ for every $j\in\mathbb{S}$. More precisely, $U$ is a slice Cauchy domain if for every $j\in\mathbb{S}$ the boundary $\partial(U\cap\mathbb{C}_j)$ of $U\cap\mathbb{C}_j$ is the union a finite number of nonintersecting piecewise continuously differentiable Jordan curves in $\mathbb{C}_j$.
\end{definition}
\begin{theorem}[{\cite[Theorem 2.1.32]{31}}, Cauchy's formulas]
Let $U \subset\mathbb{H}$ be a bounded slice Cauchy domain, let $j\in\mathbb{S}$, and set $ds_j = ds(-j)$. If $f$ is a left slice hyperholomorphic function on a set that contains $\overline{U}$, then
$$f(q)=\frac{1}{2\pi}\int_{\partial(U\cap\mathbb{C}_j)}S_L^{-1}(s,q)ds_jf(s)\text{, for every } q \in U.$$
If $f$ is a right slice hyperholomorphic function on a set that contains $\overline{U}$, then
$$f(q)=\frac{1}{2\pi}\int_{\partial(U\cap\mathbb{C}_j)}f(s)ds_jS_R^{-1}(s,q)\text{, for every } q \in U.$$
These integrals depend neither on $U$ nor on the imaginary unit $j\in\mathbb{S}$.
\end{theorem}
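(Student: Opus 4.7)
The plan is to reduce the slice Cauchy formula to the classical holomorphic Cauchy integral formula by means of the Splitting Lemma: first establish the identity for $q\in U\cap\mathbb{C}_j$, then extend it to arbitrary $q\in U$ using the axially symmetric structure of slice functions. I would argue the left slice hyperholomorphic case in detail; the right case is entirely analogous, with $S_R^{-1}(s,q)$ replacing $S_L^{-1}(s,q)$ and the order of the multiplicative factors reversed.

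For $q,s\in\mathbb{C}_j$ with $q\neq s$, commutativity inside the field $\mathbb{C}_j$ gives $q^2-2\text{Re}(s)q+|s|^2=(q-s)(q-\bar s)$, so that the defining formula for the Cauchy kernel collapses to $S_L^{-1}(s,q)=(s-q)^{-1}$. Choosing $i\in\mathbb{S}$ with $ij=-ji$ and applying the Splitting Lemma, I would write $f|_{U\cap\mathbb{C}_j}=F_0+F_1\,i$ with $F_0,F_1\colon U\cap\mathbb{C}_j\to\mathbb{C}_j$ holomorphic. Substituting this decomposition together with $ds_j=-ds\,j$ into $S_L^{-1}(s,q)\,ds_j\,f(s)$, and pushing the factor $j$ past the $\mathbb{C}_j$-valued functions $F_0,F_1$ via $ji=-ij$ and $j^2=-1$, the integrand regroups as a $\mathbb{C}_j$-valued $1$-form plus $i$ times another such form; the slice integral therefore reduces to two classical Cauchy integrals of $F_0$ and $F_1$ along $\partial(U\cap\mathbb{C}_j)$, which by the complex Cauchy formula evaluate to $F_0(q)$ and $F_1(q)$ respectively and sum to $F_0(q)+F_1(q)i=f(q)$.

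To pass to an arbitrary $q\in U$, observe, using the lemma that $S_L^{-1}(\cdot,q)$ is left slice hyperholomorphic in $q$, that the right-hand side of the Cauchy formula defines a left slice function of $q$ on $U$. Every left slice function on an axially symmetric open set is entirely determined by its values on any single slice $\mathbb{C}_j$ through the Representation Formula, which is a direct consequence of the compatibility condition (\ref{3eq6}); since both sides of the Cauchy formula are left slice functions of $q$ and agree on $U\cap\mathbb{C}_j$, they must agree throughout $U$. Independence from the domain $U$ and from the slice $\mathbb{C}_j$ then follows from Cauchy's integral theorem applied to $s\mapsto S_L^{-1}(s,q)$, which by the earlier lemma is right slice hyperholomorphic in $s$ outside $[q]$.

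The principal obstacle is the algebraic bookkeeping in the on-slice step: once the Splitting Lemma decomposition and the noncommutative differential $ds_j$ are substituted, the $\mathbb{H}$-valued factors no longer commute, and one must carefully track their order using $ij=-ji$ and $j^2=-1$ to split the slice integral cleanly as $F_0(q)+F_1(q)i$. Once that identification is in place, the remainder of the argument rests only on the classical complex Cauchy formula and the axial symmetry inherent to slice functions.
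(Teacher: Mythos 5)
This theorem is quoted verbatim from \cite[Theorem 2.1.32]{31} as background material, and the paper offers no proof of its own, so there is nothing internal to compare against. Your argument is the standard one from the slice-hyperholomorphic literature and is essentially sound: on the slice $\mathbb{C}_j$ the kernel degenerates to $S_L^{-1}(s,q)=(s-q)^{-1}$ by commutativity, the Splitting Lemma reduces the integral to two classical Cauchy integrals for the $\mathbb{C}_j$-valued holomorphic components $F_0,F_1$, and the Representation Formula (a consequence of the compatibility conditions \eqref{3eq6}) propagates the identity from $U\cap\mathbb{C}_j$ to all of $U$, since both sides are left slice functions of $q$. The only imprecision is the last sentence: Cauchy's integral theorem gives independence of the choice of $U$ within a \emph{fixed} slice, but independence of the imaginary unit $j$ does not follow from it directly --- rather, it is automatic once the formula $f(q)=\frac{1}{2\pi}\int_{\partial(U\cap\mathbb{C}_j)}S_L^{-1}(s,q)\,ds_j\,f(s)$ has been established for \emph{every} $j\in\mathbb{S}$, because the left-hand side does not depend on $j$.
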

\begin{definition}[{\cite[Definition 2.3.1]{31}}]
Let $(X,+)$ be an abelian group.
\begin{itemize}
\item $X$ is a right quaternionic vector space denoted by $X_R$ if it is endowed with a right quaternionic multiplication $(X,\mathbb{H})\rightarrow X$, $(u, q) \mapsto uq$ such that for all $u,v \in X$ and all $p, q \in\mathbb{H}$,
$$u(p + q) = up + uq, (u + v)q = uq + vq, (up)q = u(pq) \text{ and } u1 = u.$$
\item $X$ is a left quaternionic vector space denoted by $X_L$ if it is endowed with a left quaternionic multiplication $(\mathbb{H},X)\rightarrow X$, $(q,u) \mapsto qu$ such that for all $u,v \in X$ and all $p, q \in\mathbb{H}$,
$$(p + q)u = pu + qu, q(u + v) = qu + qv, q(pu) = (qp)u \text{ and } 1u = u.$$
\item $X$ is a two-sided quaternionic vector space if it is endowed with a left and a right quaternionic  multiplication such that $X$ is both a left and a right quaternionic vector space and such that $ru = ur$ for all $r\in\mathbb{R}$, and $(pu)q=p(uq)$ for all $p,q\in\mathbb{H}$ and all $u\in X$.
\end{itemize}
\end{definition}
\begin{definition}
Let $X_R$ be a right quaternionic vector space. A function  $\Vert\cdot\Vert : X_R \rightarrow [0;+\infty)$ is called a norm on $X_R$, if it satisfies
\begin{itemize}
\item[(i)] $\Vert u\Vert=0$ if and only if $u=0$;
\item[(ii)] $\Vert uq\Vert=\Vert u\Vert \vert q\vert$ for all $u\in X_R$ and all $q \in\mathbb{H}$;
\item[(iii)] $\Vert u+v\Vert\leq \Vert u\Vert+\Vert v\Vert$ for all $u,v\in X_R$.
\end{itemize}
If $X_R$ is complete with respect to the metric induced by $\Vert\cdot\Vert$, we call $X_R$ a right quaternionic Banach space.\\
Let $X_L$ be a left quaternionic vector space. A function $\Vert\cdot\Vert : X_L \rightarrow [0;+\infty)$ is called a norm on $X_L$, if it satisfies $(i)$, $(iii)$ and 
\begin{itemize}
\item[(ii')] $\Vert qu\Vert=\vert q\vert\Vert u\Vert$ for all $u\in X_L$ and $q \in\mathbb{H}$.
\end{itemize}
If $X_L$ is complete with respect to the metric induced by $\Vert\cdot\Vert$, we call $X_L$ a left quaternionic Banach space.\\
Finally, a two-sided quaternionic vector space $X$ is called a two-sided quaternionic Banach space if it is endowed with a norm $\Vert\cdot\Vert$ such that it is both a left and a right quaternionic Banach space.
\end{definition}
\begin{remark}
\emph{If $X$ is a two-sided quaternionic Banach space, then  $\Vert qu\Vert=\Vert uq\Vert=\vert q\vert\Vert u\Vert$ for all $u\in X$ and all $q \in\mathbb{H}$.}
\end{remark}
\begin{definition}[{\cite[Definition 2.3.9]{31}}, Slice hyperholomorphic vector-valued functions]
Let $U\subseteq\mathbb{H}$ be an axially symmetric open set and let $\mathcal{U}=\{(u,v)\in\mathbb{R}^2: u+v\mathbb{S}\subset U\}$. A function $f : U\rightarrow X_L$ with values in a left quaternionic Banach space $X_L$ is called a left slice function if it is of the form:
$$f(q) = f_0(u, v) + jf_1(u, v) \text{ for every } q = u + vj \in U$$
with two functions $f_0, f_1 : \mathcal{U}\rightarrow X_L$ that satisfy the compatibility conditions $(\ref{3eq6})$.\\
If in addition $f_0$ and $f_1$ satisfy the Cauchy-Riemann equations $(\ref{3eq7})$, then $f$ is called left slice hyperholomorphic.\\
A function $f : U\rightarrow X_R$ with values in a right quaternionic Banach space $X_R$ is called a right slice function if it is of the form:
$$f(q) = f_0(u, v) + f_1(u, v)j \text{ for every } q = u + vj \in U$$
with two functions $f_0, f_1 : \mathcal{U}\rightarrow X_R$ that satisfy the compatibility conditions $(\ref{3eq6})$.\\
If in addition $f_0$ and $f_1$ satisfy the Cauchy-Riemann equations $(\ref{3eq7})$, then $f$ is called right slice hyperholomorphic.
\end{definition}
\begin{theorem}[{\cite[Theorem 2.3.19]{31}}, Vector-valued Cauchy formulas]
Let $U \subset\mathbb{H}$ be a bounded slice Cauchy domain, let $j\in\mathbb{S}$, and set $ds_j = ds(-j)$. If $f$ is a left slice hyperholomorphic function with values in a left quaternionic Banach space $X_L$ that is defined on a set that contains $\overline{U}$, then
$$f(q)=\frac{1}{2\pi}\int_{\partial(U\cap\mathbb{C}_j)}S_L^{-1}(s,q)ds_jf(s)\text{, for every } q \in U.$$
If $f$ is a right slice hyperholomorphic function with values in a right quaternionic Banach space $X_R$ that is defined on a set that contains $\overline{U}$, then
$$f(q)=\frac{1}{2\pi}\int_{\partial(U\cap\mathbb{C}_j)}f(s)ds_jS_R^{-1}(s,q)\text{, for every } q \in U.$$
These integrals depend neither on $U$ nor on the imaginary unit $j\in\mathbb{S}$.
\end{theorem}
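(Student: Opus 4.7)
The plan is to reduce the vector-valued Cauchy formulas to their scalar analogues in Theorem 2.1.32 by composing with continuous linear functionals and invoking the quaternionic Hahn-Banach theorem. For the left case I would fix a continuous left $\mathbb{H}$-linear functional $\varphi : X_L \to \mathbb{H}$ and consider $\varphi \circ f$. Since $f(u+vj) = f_0(u,v) + jf_1(u,v)$ and $\varphi$ is additive and commutes with left multiplication by scalars, $\varphi(f(u+vj)) = \varphi(f_0(u,v)) + j\,\varphi(f_1(u,v))$. Real-linearity and continuity of $\varphi$ transport (\ref{3eq6}) and (\ref{3eq7}) from $(f_0,f_1)$ to $(\varphi \circ f_0, \varphi \circ f_1)$, so $\varphi \circ f \in \mathcal{SH}_L(U)$ is a scalar left slice hyperholomorphic function.

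Next I would apply Theorem 2.1.32 to $\varphi \circ f$, obtaining
$$\varphi(f(q)) = \frac{1}{2\pi}\int_{\partial(U\cap\mathbb{C}_j)} S_L^{-1}(s,q)\,ds_j\,\varphi(f(s)).$$
The quaternion $S_L^{-1}(s,q)\,ds_j$ multiplies $\varphi(f(s))$ on the left, so left $\mathbb{H}$-linearity of $\varphi$ rewrites the integrand as $\varphi\bigl(S_L^{-1}(s,q)\,ds_j\,f(s)\bigr)$, and continuity of $\varphi$ permits exchange with the integral, giving $\varphi(f(q)) = \varphi(g(q))$, where $g(q)$ denotes the proposed right-hand side. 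The quaternionic Hahn-Banach theorem, which guarantees that continuous left $\mathbb{H}$-linear functionals separate points of $X_L$, then forces $f(q) = g(q)$ for every $q \in U$. The right slice hyperholomorphic case is handled analogously with continuous right $\mathbb{H}$-linear functionals on $X_R$, now commuting $\varphi$ past the factor $ds_j\,S_R^{-1}(s,q)$ appearing on the right of $\varphi(f(s))$. Independence of the integrals from the slice Cauchy domain $U$ and the unit $j \in \mathbb{S}$ transfers from the scalar case via the same duality, or can be re-derived directly by applying a vector-valued Cauchy theorem obtained from Theorem 2.1.21 by the same reduction.

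The main obstacle I anticipate is essentially bookkeeping: carefully tracking the left versus right placement of kernels, differential forms, and functionals, and verifying that $(f_0,f_1)$ passes through $\varphi$ in a way that preserves the slice hyperholomorphic structure at the componentwise level. A secondary concern is to pin down an appropriate quaternionic Hahn-Banach theorem for left, respectively right, quaternionic Banach spaces strong enough to separate points; this is standard in the quaternionic functional analysis literature but needs an explicit reference rather than being left implicit.
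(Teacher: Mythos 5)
This theorem is one of the quoted preliminaries: the paper imports it from \cite[Theorem 2.3.19]{31} and gives no proof of its own, so there is no internal argument to compare yours against. On its own merits, your duality reduction is a legitimate and essentially complete strategy, and it is a genuinely different route from the one taken in the cited source, which argues slice-wise: one restricts $f$ to $U\cap\mathbb{C}_j$, observes via the (vector-valued) splitting lemma that the restriction is holomorphic as a map into $X_L$ viewed as a complex Banach space over $\mathbb{C}_j$, and then invokes the classical Cauchy formula for Banach-space-valued holomorphic functions before reassembling the slice kernel. Your approach buys a uniform ``weak equals strong'' argument that recycles the scalar Theorem 2.1.32 verbatim; the slice-wise approach avoids any appeal to duality and so needs no Hahn--Banach input. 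Two points in your write-up deserve to be made explicit rather than left implicit. First, before you can commute $\varphi$ with the integral you must know the vector-valued integral exists at all; this is harmless (continuous integrand on a compact piecewise $C^1$ contour, so the $X_L$-valued Riemann integral converges), but it is logically prior to the interchange. Second, the quaternionic Hahn--Banach theorem you invoke must be the version for one-sided (left, respectively right) quaternionic Banach spaces with left (respectively right) $\mathbb{H}$-linear functionals; this goes back to Suchomlinov and is available, but the noncommutativity makes the usual reconstruction $\varphi=\rho-\mathrm{i}\rho(\mathrm{i}\,\cdot)-\mathrm{j}\rho(\mathrm{j}\,\cdot)-\mathrm{k}\rho(\mathrm{k}\,\cdot)$ from a real functional $\rho$ require a genuine check of $\mathbb{H}$-linearity, so, as you yourself note, an explicit citation is needed rather than an appeal to folklore. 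With those two references supplied, your proof is correct.
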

\begin{definition}
Let $X$ be a two-sided quaternionic Banach space. A right $($resp. left$)$ linear operator on $X$ is a map $T : X \rightarrow X$ such that:
$$T(up + v) = (Tu)p + Tv (\text{ resp. } T(pu + v) = p(Tu) + Tv)  \text{ for all } u, v \in X \text{ and all } p\in\mathbb{H}.$$
\end{definition}
A right or left linear operator $T$ on $X$ is called bounded if
$$\Vert T\Vert:=\sup\{\Vert Tu\Vert: u\in X, \Vert u\Vert=1\}<\infty.$$
The set of all right $($resp. left$)$ linear bounded operators on $X$ is denoted by $\mathcal{B}_R(X)$ $($resp. $\mathcal{B}_L(X))$. $ \mathcal{B}_R(X)$ $($resp. $\mathcal{B}_L(X))$ is viewed as a two-sided quaternionic vector space equipped with the metric $\mathcal{B}_R(X)\times\mathcal{B}_R(X)\ni(A,B)\mapsto\Vert A-B\Vert$ $($resp. $\mathcal{B}_L(X)\times\mathcal{B}_L(X)\ni(A,B)\mapsto\Vert A-B\Vert)$.\\

In a two-sided quaternionic Banach space $X$, we can define a left and a right quaternionic  multiplication on $\mathcal{B}_R(X)$ $($resp. $\mathcal{B}_L(X))$ by
$$(Tq)u = T(qu)\text{ and } (qT)(u) = q(T(u)) \text{ for all } q \in\mathbb{H}, u\in X  \text{ and all } T \in \mathcal{B}_R(X)$$
$$(\text{resp. } (Tq)u = T(u)q\text{ and } (qT)(u) = T(uq) \text{ for all } q \in\mathbb{H},u\in X  \text{ and all } T \in \mathcal{B}_L(X)).$$

The spectral theory over quaternionic Hilbert spaces has been developed in \cite{34} and \cite{37}.\\

In the remainder of this paper, $X$ will be a two-sided quaternionic Banach space. We will consider just right linear operators on $X$. The theory we develop here also applies in the case of left linear operators  with obvious modifications.\\

Let $T \in \mathcal{B}_R(X)$. We want to define the notion of spectrum for right linear operators on $X$ such that this notion generalize the known results on the spectrum in the complex case (for instance, the compactness of the spectrum, the spectrum of self-adjoint operators is real).  Note that if $q\in\mathbb{H}$, $T-Iq=T-qI$, where $I$ is the identity operator on $X$. Take $X=\mathbb{H}\oplus\mathbb{H}$ equipped with the standard scalar product:
$$\langle \begin{bmatrix}
p \\ 
q
\end{bmatrix};\begin{bmatrix}
p' \\ 
q'
\end{bmatrix}\rangle=\bar{p}p'+\bar{q}q'\text{ for all }p,q,p',q'\in\mathbb{H}.$$
Then $T:=\begin{bmatrix}
0 &\mathrm{i} \\ 
-\mathrm{i} & 0
\end{bmatrix}$ is self-adjoint. Let $u=\begin{bmatrix}
1 \\ 
-\mathrm{k}
\end{bmatrix}$, then $(T-\mathrm{j}I)u=0$, hence $\mathrm{j}$ is an eigenvalue of $T$, thus the spectrum of $T$ is not real, and so the operators $T-Iq$ and $T-qI$ should not be used to define the spectrum of $T$. F. Colombo et al. \cite{32} extended the definitions of the spectrum and resolvent in quaternionic Banach spaces as follows.
\begin{definition}
Let $T \in \mathcal{B}_R(X)$. For $q \in\mathbb{H}$, we set
$$Q_q(T) := T^2 -2\text{Re}(q)T + \vert q\vert^2I.$$
Where $I$ is the identity operator on $X$.
We define the S-resolvent set $\rho_S(T)$ of $T$ as:
$$\rho_S(T) := \{q \in\mathbb{H} : Q_q(T) \text{ is invertible in } \mathcal{B}_R(X)\},$$
and we define the S-spectrum $\sigma_S(T)$ of $T$ as:
$$\sigma_S(T):=\mathbb{H}\setminus\rho_S(T).$$
\end{definition}
\begin{proposition}[{\cite[Proposition 3.1.8]{31}}]
Let $T\in\mathcal{B}_R(X)$. The sets $\sigma_S(T)$ and $\rho_S(T)$ are axially symmetric.
\end{proposition}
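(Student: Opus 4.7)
The plan is to exploit the fact that the operator $Q_q(T) = T^2 - 2\operatorname{Re}(q)T + |q|^2 I$ depends on $q$ only through the two real quantities $\operatorname{Re}(q)$ and $|q|^2$, and both of these are invariants of the conjugacy class $[q]$. So axial symmetry will follow without any analysis at all — it is a purely algebraic invariance.

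Concretely, I would proceed as follows. First, fix $q\in\mathbb{H}$ and take an arbitrary $p\in[q]$. Using the description $[q] = \operatorname{Re}(q)+|\operatorname{Im}(q)|\mathbb{S}$ given in the introduction, write $p = \operatorname{Re}(q) + |\operatorname{Im}(q)|\,j$ for some $j\in\mathbb{S}$. Since $j^{2}=-1$, this yields
\[
\operatorname{Re}(p)=\operatorname{Re}(q),\qquad |p|^{2}=\operatorname{Re}(q)^{2}+|\operatorname{Im}(q)|^{2}=|q|^{2}.
\]
Substituting into the definition of $Q_{p}(T)$ gives $Q_{p}(T) = Q_{q}(T)$ as operators in $\mathcal{B}_R(X)$. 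Hence $Q_{p}(T)$ is invertible if and only if $Q_{q}(T)$ is, which means $p\in\rho_{S}(T)$ iff $q\in\rho_{S}(T)$.

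This shows that $\rho_S(T)$ is a union of conjugacy classes, i.e. axially symmetric. Since $\sigma_S(T)=\mathbb{H}\setminus\rho_S(T)$, its axial symmetry follows immediately by taking complements, as the complement of a union of $2$-spheres $[q]$ is again a union of $2$-spheres. There is no real obstacle here — the statement is essentially a bookkeeping consequence of the way $Q_q(T)$ was defined (symmetric in the class-level data $\operatorname{Re}(q),|q|$), which is precisely the reason Colombo et al.\ chose this quadratic substitute for $T-qI$ in the first place. The only point requiring a moment's care is the identity $|p|^{2}=|q|^{2}$ for conjugated quaternions, which I would justify by the computation above rather than by invoking the (equivalent) fact that conjugation by a nonzero quaternion preserves the norm.
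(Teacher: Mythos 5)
Your argument is correct and is exactly the standard one: $Q_q(T)$ depends on $q$ only through $\operatorname{Re}(q)$ and $|q|^2$, both constant on $[q]$, so $\rho_S(T)$ and hence its complement $\sigma_S(T)$ are unions of $2$-spheres. The paper itself gives no proof (it cites Proposition 3.1.8 of Colombo--Gantner--Kimsey), and your reasoning coincides with the proof given there.
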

\begin{theorem}[{\cite[Theorem 3.1.13]{31}}, Compactness of the S-spectrum]
Let $T\in\mathcal{B}_R(X)$. The S-spectrum $\sigma_S(T)$ of $T$ is a nonempty compact set contained in the closed ball $\{q\in\mathbb{H}:\vert q\vert\leq\Vert T\Vert\}$.
\end{theorem}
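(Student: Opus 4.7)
The plan is to split the statement into three independent assertions and handle them in the natural order: (i) $\sigma_S(T)\subseteq\{q:|q|\le\|T\|\}$, (ii) $\rho_S(T)$ is open (so $\sigma_S(T)$ is compact), (iii) $\sigma_S(T)\ne\emptyset$. Throughout, the key observation is that $Q_q(T)=T^2-2\operatorname{Re}(q)T+|q|^2 I$ is a \emph{real} polynomial in $T$, so there is no ambiguity between left and right multiplication by $q$ and we can treat $Q_q(T)$ using ordinary real-coefficient algebra in the Banach algebra $\mathcal{B}_R(X)$.

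For (i), I would invert $Q_q(T)$ explicitly when $|q|>\|T\|$ by solving $Q_q(T)X=I$ with an ansatz $X=\sum_{n\ge0}a_n T^n$, $a_n\in\mathbb{R}$. Matching coefficients gives $a_0=1/|q|^2$, $a_1=2\operatorname{Re}(q)/|q|^4$, and the two-step recurrence
\[
|q|^2 a_n = 2\operatorname{Re}(q)\,a_{n-1}-a_{n-2},\qquad n\ge 2,
\]
whose characteristic roots are $(\operatorname{Re}(q)\pm\mathrm{i}|\operatorname{Im}(q)|)/|q|^2$, both of modulus $1/|q|$. Hence $|a_n|\le C|q|^{-n}$, and the series $\sum_n a_n T^n$ converges in $\mathcal{B}_R(X)$ because $\|T\|/|q|<1$; by construction its sum is $Q_q(T)^{-1}$. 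For (ii), given $q_0\in\rho_S(T)$, I would write $Q_q(T)=Q_{q_0}(T)+E(q)$ with $E(q)=(|q|^2-|q_0|^2)I-2(\operatorname{Re}(q)-\operatorname{Re}(q_0))T$; since $\|E(q)\|\to 0$ as $q\to q_0$, the usual Neumann perturbation argument yields invertibility of $Q_q(T)$ in a neighborhood, so $\rho_S(T)$ is open.

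The main obstacle is (iii), the nonemptyness, since the standard complex proof via Liouville applied to the scalar resolvent does not transfer directly: the map $q\mapsto(T-qI)^{-1}$ is not the correct object in the quaternionic setting. The remedy is to work with the S-resolvent. Assuming for contradiction that $\sigma_S(T)=\emptyset$, I would define, for every $q\in\mathbb{H}$,
\[
S_R^{-1}(q,T):=-(T-\bar q I)\,Q_q(T)^{-1},
\]
which is then an $\mathcal{B}_R(X)$-valued function defined on all of $\mathbb{H}$. Using the representation of $Q_q(T)^{-1}$ derived in step (i), one checks that on $|q|>\|T\|$ it coincides with the convergent power series $\sum_{n\ge0}T^n q^{-n-1}$, a right slice hyperholomorphic function in $q$. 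The continuity of $Q_q(T)^{-1}$ in $q$ together with slice hyperholomorphicity on every slice $\mathbb{C}_j$ (from the Cauchy--Riemann identities satisfied by the scalar kernel $S_R^{-1}(q,s)$ in $q$, transferred to the operator-valued setting) shows $q\mapsto S_R^{-1}(q,T)$ is entire right slice hyperholomorphic with values in $\mathcal{B}_R(X)$. Moreover, the tail estimate from the series gives $\|S_R^{-1}(q,T)\|\to 0$ as $|q|\to\infty$, so the function is bounded.

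Applying the Liouville theorem for (vector-valued) entire slice hyperholomorphic functions forces $S_R^{-1}(q,T)\equiv 0$, hence $(T-\bar q I)=0$ for every $q\in\mathbb{H}$ after right-multiplying by $Q_q(T)$, which is absurd. This contradiction proves $\sigma_S(T)\ne\emptyset$, and combined with (i) and (ii) the theorem follows. The delicate point I expect to spend the most care on is the verification that the operator-valued $q\mapsto S_R^{-1}(q,T)$ genuinely falls under the scope of a slice-Liouville theorem — this requires checking smoothness of $q\mapsto Q_q(T)^{-1}$ and that its restrictions to each slice $\mathbb{C}_j$ satisfy the Cauchy--Riemann equations of the appropriate type, which is the step where the quaternionic geometry actually enters.
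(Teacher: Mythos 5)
This theorem is imported by the paper from \cite[Theorem 3.1.13]{31} and stated without proof, so there is no internal argument to compare against; your proposal is in effect a reconstruction of the proof in the cited reference, and its structure is correct. Parts (i) and (ii) are sound: your real-coefficient ansatz $\sum_n a_nT^n$ is exactly the series the paper itself records in Theorem \ref{3t3}, where $a_n=\sum_{k=0}^{n}\bar q^{-k-1}q^{-n+k-1}$ and the bound is $\vert a_n\vert\le (n+1)\vert q\vert^{-n-2}$; your estimate $\vert a_n\vert\le C\vert q\vert^{-n}$ misses the polynomial factor that appears when the two characteristic roots coincide (i.e.\ when $q$ is real), but this is harmless for convergence when $\Vert T\Vert<\vert q\vert$. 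The Neumann perturbation argument for openness of $\rho_S(T)$ works verbatim because $Q_q(T)$ depends on $q$ only through the continuous real quantities $\mathrm{Re}(q)$ and $\vert q\vert^2$.

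For (iii) the Liouville strategy is the standard and correct one, but two points need tightening. First, a genuine bookkeeping inconsistency: the closed formula you write, $-(T-\bar qI)Q_q(T)^{-1}$, is the \emph{right} $S$-resolvent, which is \emph{left} slice hyperholomorphic in $q$ and expands as $\sum_n q^{-n-1}T^n$; the series you actually use, $\sum_n T^nq^{-n-1}$, is the expansion of $S_L^{-1}(q,T)=-Q_q(T)^{-1}(T-\bar qI)$, which is the one that is right slice hyperholomorphic. These two operators differ in general, since left multiplication by $\bar q$ does not commute with $Q_q(T)^{-1}$ when $T$ is only right linear; you must pick one kernel and match it to the correct handedness of the splitting lemma and of the Liouville theorem you invoke. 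Second, the Liouville theorem for operator-valued entire slice hyperholomorphic functions is not available off the shelf in this paper and must be supplied; the route you indicate is the right one (restrict to a slice $\mathbb{C}_j$, split into two entire $\mathbb{C}_j$-holomorphic Banach-space-valued components, apply classical vector-valued Liouville to each bounded component vanishing at infinity, and propagate the vanishing to all of $\mathbb{H}$ by the representation formula). Your concluding contradiction is valid; a slightly cleaner finish is to note that the vanishing of the resolvent forces all Taylor coefficients of its expansion at infinity to vanish, giving $I=T^0=0$ directly.
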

Let $T \in \mathcal{B}_R(X)$. Then the S-spectral radius of $T$ is defined to be the nonnegative real number
$$r_S(T) := \sup\{\vert q\vert : q \in \sigma_S(T)\}.$$
\begin{theorem}[{\cite[Theorem 4.2.3]{31}}]
\label{3t6}
For $T \in \mathcal{B}_R(X)$, we have
$$r_S(T) = \lim_{n\rightarrow+\infty}\Vert T^n\Vert^{\frac{1}{n}}.$$
\end{theorem}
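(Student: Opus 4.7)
The strategy is to prove the two inequalities $r_S(T)\leq L$ and $L\leq r_S(T)$ separately, where $L:=\lim_n\|T^n\|^{1/n}$; existence of this limit (with $L\leq\|T\|$) follows at once from submultiplicativity $\|T^{n+m}\|\leq\|T^n\|\|T^m\|$ and Fekete's subadditive lemma, which in fact gives $L=\inf_n\|T^n\|^{1/n}$.

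For $r_S(T)\leq L$ I would use a spectral-mapping argument for the intrinsic polynomial $q\mapsto q^n$. Fix $q_0\in\sigma_S(T)$. Since any quaternion $r$ satisfies $r^2-2\text{Re}(r)\,r+|r|^2=0$, the values $t=q_0$ and $t=\bar q_0$ are both roots of the real-coefficient polynomial $Q_{q_0^n}(t^n)$ of degree $2n$, so the real factor $Q_{q_0}(t)$ divides $Q_{q_0^n}(t^n)$ in $\mathbb{R}[t]$, giving $Q_{q_0^n}(t^n)=Q_{q_0}(t)\,p(t)$ for some $p\in\mathbb{R}[t]$ and hence the operator identity $Q_{q_0^n}(T^n)=Q_{q_0}(T)\,p(T)$ with commuting factors inside the commutative subalgebra $\mathbb{R}[T]\subset\mathcal{B}_R(X)$. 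If $Q_{q_0^n}(T^n)$ were invertible in $\mathcal{B}_R(X)$, then $p(T)\,Q_{q_0^n}(T^n)^{-1}$ would commute with $Q_{q_0}(T)$ and furnish a two-sided inverse of $Q_{q_0}(T)$, contradicting $q_0\in\sigma_S(T)$. Therefore $q_0^n\in\sigma_S(T^n)$, so $|q_0|^n\leq r_S(T^n)\leq\|T^n\|$ by Theorem~\ref{3t6}'s underlying estimate (Theorem 3.1.13 applied to $T^n$); taking $n$-th roots and the supremum over $q_0$ gives $r_S(T)\leq\|T^n\|^{1/n}$ for every $n$, hence $r_S(T)\leq L$.

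For $L\leq r_S(T)$, fix $r>r_S(T)$. Compactness and axial symmetry of $\sigma_S(T)$ allow choosing a bounded slice Cauchy domain $U$ with $\sigma_S(T)\subset U\subset\overline{U}\subset\{q\in\mathbb{H}:|q|<r\}$. The operator-valued version of Theorem 2.1.32, applied to the intrinsic slice hyperholomorphic monomial $s\mapsto s^n$, produces the S-functional calculus representation
$$T^n=\frac{1}{2\pi}\int_{\partial(U\cap\mathbb{C}_j)}S_L^{-1}(s,T)\,ds_j\,s^n,\qquad j\in\mathbb{S}.$$
A length-times-sup estimate of the integrand bounds $\|T^n\|\leq C(U,T)\,r^n$ with $C(U,T)$ independent of $n$, whence $L\leq r$; letting $r\downarrow r_S(T)$ closes the argument.

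The main obstacle is not the spectral-mapping step (purely algebraic once the divisibility $Q_{q_0}(t)\mid Q_{q_0^n}(t^n)$ in $\mathbb{R}[t]$ is noted), but the operator-valued Cauchy representation of $T^n$ in the second inequality: one must verify that substituting the slice hyperholomorphic Cauchy kernel into the scalar formula of Theorem 2.1.32 and evaluating on the monomial $s^n$ returns exactly $T^n$. This is checked directly from the Neumann expansion $S_L^{-1}(s,T)=\sum_{k\geq 0}T^ks^{-k-1}$ (convergent for $|s|>\|T\|$, where $Q_s(T)$ is invertible by Theorem 3.1.13) together with the elementary scalar identity $\int_{|s|=R,\,s\in\mathbb{C}_j}s^{-k-1}\,ds_j\,s^n=2\pi\delta_{n,k}$, and then extended to arbitrary slice Cauchy contours $\partial(U\cap\mathbb{C}_j)$ by the Cauchy theorem for slice hyperholomorphic functions.
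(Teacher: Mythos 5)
The paper does not prove this statement: it is imported verbatim from \cite[Theorem 4.2.3]{31}, so there is no in-paper argument to compare yours against. Judged on its own, your proposal is correct and follows the same Gelfand-type strategy as the cited source: existence of the limit $L$ by submultiplicativity, the upper bound $r_S(T)\le L$ from the implication $q_0\in\sigma_S(T)\Rightarrow q_0^n\in\sigma_S(T^n)$ combined with $\sigma_S(T^n)\subseteq\{q:|q|\le\Vert T^n\Vert\}$, and the lower bound from the Cauchy-integral representation of $T^n$ over a slice Cauchy domain slightly larger than $\sigma_S(T)$. The one place you genuinely deviate is the inclusion $\{q^n:q\in\sigma_S(T)\}\subseteq\sigma_S(T^n)$: the reference obtains it from the spectral mapping theorem (Theorem \ref{3t8}), whereas you derive it by hand from the divisibility $Q_{q_0}(t)\mid Q_{q_0^n}(t^n)$ in $\mathbb{R}[t]$. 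That argument is sound --- for $q_0\notin\mathbb{R}$ the minimal polynomial $Q_{q_0}$ divides every real polynomial vanishing at $q_0$, and for $q_0\in\mathbb{R}$ one has $Q_{q_0^n}(t^n)=(t^n-q_0^n)^2$, so the required double root at $q_0$ is automatic; your phrasing ``$q_0$ and $\bar q_0$ are both roots'' silently needs this case distinction, but it costs nothing. The commutation step (the inverse of $Q_{q_0^n}(T^n)$ commutes with all real polynomials in $T$) is also fine. Two routine ingredients you correctly flag should be cited rather than reproved: the existence of a slice Cauchy domain $U$ with $\sigma_S(T)\subset U\subset\overline{U}\subset\{q:|q|<r\}$, and the consistency $s^n\mapsto T^n$ of the S-functional calculus with polynomials (your Neumann-series-plus-contour-deformation verification of the latter is exactly the standard one). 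With those references supplied, the proof is complete.
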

\begin{theorem}
\label{3t3}
Let $T\in\mathcal{B}_R(X)$ and $q\in\mathbb{H}$ with $r_S(T) < \vert q\vert$. Then
$$(T^2-2\text{Re}(q)T + \vert q\vert^2I)^{-1}=\sum_{n=0}^{+\infty}T^n\sum_{k=0}^{n}\bar{q}^{-k-1}q^{-n+k-1},$$
where this series converges in the operator norm.
\end{theorem}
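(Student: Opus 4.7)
The plan is to define $c_n := \sum_{k=0}^n \bar{q}^{-k-1} q^{-n+k-1}$ and $S := \sum_{n=0}^{\infty} T^n c_n$, verify directly that $Q_q(T) S = I$, and conclude $S = Q_q(T)^{-1}$ by appealing to the invertibility of $Q_q(T)$---which holds because $|q| > r_S(T)$ forces $q \in \rho_S(T)$.

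For convergence in operator norm, note that $(T^n c_n)(u) = T^n(c_n u)$, so $\|T^n c_n\| \leq \|T^n\|\,|c_n|$. The triangle inequality with $|\bar{q}|=|q|$ gives $|c_n| \leq (n+1)|q|^{-n-2}$. By Theorem \ref{3t6}, $\|T^n\|^{1/n} \to r_S(T) < |q|$; picking $\rho \in (r_S(T),|q|)$ makes $\|T^n\| \leq \rho^n$ for large $n$, so $\sum_n (n+1)\|T^n\|\,|q|^{-n-2} < \infty$ and $S$ converges absolutely.

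To verify $Q_q(T) S = I$, I use that $Q_q(T)$ has real coefficients and hence commutes with $T^n$ and pulls through the absolutely convergent sum, yielding $Q_q(T)(T^n c_n) = T^{n+2}c_n - 2\text{Re}(q)T^{n+1}c_n + |q|^2 T^n c_n$. Reindexing gives $Q_q(T) S = \sum_{m=0}^{\infty} T^m \alpha_m$ where $\alpha_0 = |q|^2 c_0$, $\alpha_1 = |q|^2 c_1 - 2\text{Re}(q)c_0$, and $\alpha_m = c_{m-2} - 2\text{Re}(q)c_{m-1} + |q|^2 c_m$ for $m \geq 2$. The key algebraic point is that $q$ and $\bar{q}$ both lie in the commutative real subalgebra $\mathbb{C}_{j_0}$, where $j_0 := \text{Im}(q)/|\text{Im}(q)|$ (any $j_0 \in \mathbb{S}$ if $q\in\mathbb{R}$), so the telescoping identity
\[
\bar{q}\,c_n - c_{n-1} = \sum_{k=0}^n \bar{q}^{-k} q^{-n+k-1} - \sum_{k=1}^n \bar{q}^{-k} q^{-n+k-1} = q^{-n-1}
\]
together with $|q|^2 = \bar{q}q$ and $2\text{Re}(q) = q+\bar{q}$ gives $\alpha_m = q(\bar{q}c_m - c_{m-1}) - (\bar{q}c_{m-1} - c_{m-2}) = q\cdot q^{-m-1} - q^{-m} = 0$ for $m \geq 2$, while $\alpha_0 = |q|^2 \bar{q}^{-1}q^{-1} = 1$ and $\alpha_1 = 0$ follow by direct inspection. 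Thus $Q_q(T)S = I$, and left-multiplying by $Q_q(T)^{-1}$ finishes the proof.

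The main obstacle is conceptual rather than computational: the coefficients $c_n$ appear to the right of $T^n$ in the series but act by \emph{left}-multiplication on vectors, and $T$ need not commute with such left-multiplications---so the naive factorization $Q_q(T) = (T - qI)(T - \bar{q}I)$ does not hold over $\mathbb{H}$. The proof circumvents this by never factoring $Q_q(T)$, instead keeping all scalar coefficients grouped to the right of powers of $T$ and relying only on the scalar commutativity of $q$ and $\bar{q}$ inside $\mathbb{C}_{j_0}$ to collapse the telescoping sums.
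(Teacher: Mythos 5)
Your proof is correct and follows the same route as the paper: the identical coefficient bound $|c_n|\le (n+1)|q|^{-n-2}$ combined with Theorem \ref{3t6} gives convergence in operator norm. The only difference is that you carry out the telescoping verification of $Q_q(T)S=I$ explicitly (correctly handling the noncommutativity by keeping the scalar coefficients to the right of the powers of $T$ and using only that $q$ and $\bar q$ commute), whereas the paper delegates exactly this step to the proof of \cite[Theorem 3.1.5]{31}.
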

\begin{proof}
Let 
$$a_n:=\sum_{k=0}^{n}\bar{q}^{-k-1}q^{-n+k-1},$$
then 
$$\vert a_n\vert\leq (n+1)\vert q\vert^{-n-2}.$$
Hence 
$$\Vert T^n\sum_{k=0}^{n}\bar{q}^{-k-1}q^{-n+k-1}\Vert\leq \Vert T^n\Vert (n+1)\vert q\vert^{-n-2}.$$
We have 
$$\lim_{n\rightarrow\infty}\left( \Vert T^n\Vert (n+1)\vert q\vert^{-n-2}\right)^{\frac{1}{n}}=\frac{r_S(T)}{\vert q\vert}.$$
Thus the series $\sum_{n=0}^{+\infty}T^n\sum_{k=0}^{n}\bar{q}^{-k-1}q^{-n+k-1}$ converges in the operator norm. The rest follows
from the proof of \cite[Theorem 3.1.5]{31}.
\end{proof}
\begin{definition}[{\cite[Definition 3.2.5]{31}}, S-functional calculus]
Let $T \in\mathcal{B}_R(X)$. Let $U \supset\sigma_S(T)$ be a bounded slice Cauchy domain, let $j\in\mathbb{S}$, and set $ds_j = ds(-j)$. For every $f\in\mathcal{SH}_L(\bar{U})$, we define
$$f(T):=\frac{1}{2\pi}\int_{\partial(U\cap\mathbb{C}_j)}S_L^{-1}(s,T)ds_jf(s).$$
For every $f\in\mathcal{SH}_R(\bar{U})$, we define
$$f(T):=\frac{1}{2\pi}\int_{\partial(U\cap\mathbb{C}_j)}f(s)ds_jS_R^{-1}(s,T).$$
\end{definition}
Let $K\subseteq\mathbb{H}$. In the following, we mean by $\mathcal{SH}_R(K)$ $($resp. $\mathcal{SH}_L(K)$, $\mathcal{N}(K))$, the set of all right $($resp. left, intrinsic$)$ slice hyperholomorphic functions on an open axially symmetric set $U$ that contains $K$.
\begin{theorem}[{\cite[Theorem 4.1.3]{31}}, Product rule]
\label{3t7}
Let $T \in\mathcal{B}_R(X)$, $f\in\mathcal{N}(\sigma_S(T))$ and $g\in\mathcal{SH}_L(\sigma_S(T))$ or $g\in\mathcal{SH}_R(\sigma_S(T))$. Then
$$(fg)(T) = f(T)g(T).$$
\end{theorem}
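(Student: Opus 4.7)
The plan is to imitate the classical proof of the product rule for the Riesz--Dunford calculus, but using the Cauchy kernels $S_L^{-1}$ and $S_R^{-1}$ in place of the classical resolvent and the \emph{S-resolvent equation} in place of the first resolvent identity. Throughout, the intrinsicity of $f$ will be essential, because it is precisely the property that allows $f(T)$ to be represented by either the left or the right Cauchy formula and that permits scalar factors $f(p)$ to commute past the quaternionic terms that appear inside the contour integrals.

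First I will treat the case $g\in\mathcal{SH}_L(\sigma_S(T))$; the argument for $g\in\mathcal{SH}_R(\sigma_S(T))$ is entirely symmetric after swapping the roles of the two Cauchy kernels. I pick an axially symmetric open neighborhood $V$ of $\sigma_S(T)$ on which both $f$ and $g$ are slice hyperholomorphic and I choose two bounded slice Cauchy domains $U_s,U_p$ with $\sigma_S(T)\subset U_s$, $\overline{U_s}\subset U_p$ and $\overline{U_p}\subset V$. Fixing $j\in\mathbb{S}$, I express $f(T)$ via the right Cauchy formula (legitimate because $f\in\mathcal{N}\subset\mathcal{SH}_R$) over $\partial(U_p\cap\mathbb{C}_j)$, and $g(T)$ via the left Cauchy formula over $\partial(U_s\cap\mathbb{C}_j)$, then interchange the two integrals by Fubini to obtain
\[
f(T)g(T)=\frac{1}{(2\pi)^2}\int_{\partial(U_p\cap\mathbb{C}_j)}\int_{\partial(U_s\cap\mathbb{C}_j)} f(p)\,dp_j\,S_R^{-1}(p,T)\,S_L^{-1}(s,T)\,ds_j\,g(s).
\]

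The next step is to replace $S_R^{-1}(p,T)S_L^{-1}(s,T)$ using the S-resolvent equation (an identity implicit in the framework of \cite{31}), which rewrites this product as a combination of $S_R^{-1}(p,T)$ and $S_L^{-1}(s,T)$ times the commutative scalar factor $Q_s(p)^{-1}=(p^2-2\mathrm{Re}(s)p+|s|^2)^{-1}$. Splitting the double integral into the resulting summands, the terms containing $S_L^{-1}(s,T)$ will drop out: in those terms one can freely integrate in $p$ first, and since the remaining $p$-dependent integrand is right slice hyperholomorphic on $\overline{U_p}$ while the contour $\partial(U_s\cap\mathbb{C}_j)$ lies strictly inside $U_p$, Cauchy's theorem (Theorem 1.3 applied on $U_p\setminus\overline{U_s}$) forces the $p$-integral to vanish. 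For the surviving summands, which contain $S_R^{-1}(p,T)$ and integrate $f(p)\,Q_s(p)^{-1}$ against $ds_j\,g(s)$, the inner $s$-integral (taken first) reproduces $f(p)g(p)$ by the scalar Cauchy formula (Theorem 1.2) applied to the left slice hyperholomorphic function $g$, using also the commutation properties of the intrinsic $f$. What remains is precisely the right Cauchy representation of $(fg)(T)$ over $\partial(U_p\cap\mathbb{C}_j)$.

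The main obstacle will be the non-commutative bookkeeping in the application of the S-resolvent equation: quaternionic factors and the scalar $Q_s(p)^{-1}$ must be pushed past $dp_j$, $ds_j$, $S_R^{-1}(p,T)$, $S_L^{-1}(s,T)$ and $f(p)$ in a controlled way, and the fact that $fg$ remains left slice hyperholomorphic (so that its right Cauchy representation is meaningful, using that $f$ is intrinsic) must be verified. The intrinsicity hypothesis on $f$ is used at exactly these points; dropping it breaks the commutations and is the reason the theorem does not hold for arbitrary $f\in\mathcal{SH}_L(\sigma_S(T))$.
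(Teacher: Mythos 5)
First, a point of reference: the paper does not prove this statement at all --- it is quoted verbatim from \cite[Theorem 4.1.3]{31}, so there is no internal proof to compare yours against. Your outline does follow the strategy of the proof in that reference (double Cauchy integral, S-resolvent equation, Cauchy's integral theorem to kill half the terms, a Cauchy-formula evaluation to reassemble the product), and the points at which you locate the use of intrinsicity are the right ones. However, as written the sketch gets the mechanics backwards in two places, and one of these is not a mere bookkeeping slip. With your choice of contours ($f$ represented by the right Cauchy formula on the outer boundary $\partial(U_p\cap\mathbb{C}_j)$, $g$ by the left Cauchy formula on the inner boundary $\partial(U_s\cap\mathbb{C}_j)$), the S-resolvent equation applied to $S_R^{-1}(p,T)S_L^{-1}(s,T)$ produces the scalar factor $(s^2-2\mathrm{Re}(p)s+|p|^2)^{-1}$, whose poles lie on $[p]$ and hence outside $\overline{U_s}$. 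Consequently it is the terms containing $S_R^{-1}(p,T)$ that vanish (their $s$-integrands are intrinsic slice hyperholomorphic times the left slice hyperholomorphic $g$ on all of $\overline{U_s}$, so Theorem 1.3 applies), and the terms containing $S_L^{-1}(s,T)$ that survive --- the opposite of what you assert. Your claimed computation of the ``surviving'' terms cannot work even in principle: you propose to recover $g(p)$ for $p\in\partial(U_p\cap\mathbb{C}_j)$ from the Cauchy integral of $g$ over $\partial(U_s\cap\mathbb{C}_j)$, but $p$ lies \emph{outside} $U_s$, where the Cauchy formula reproduces nothing. Likewise the endpoint cannot be ``the right Cauchy representation of $(fg)(T)$'': since $g$ is only assumed left slice hyperholomorphic, $fg$ is only left slice hyperholomorphic (this is exactly what intrinsicity of $f$ buys), so the target must be the left Cauchy integral $\frac{1}{2\pi}\int_{\partial(U_s\cap\mathbb{C}_j)}S_L^{-1}(s,T)\,ds_j\,f(s)g(s)$ over the \emph{inner} contour; you flag this tension yourself but then resolve it the wrong way.

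Two further gaps would need filling in any complete write-up. You invoke the S-resolvent equation only as ``an identity implicit in the framework'' without stating it; its exact form (which variable carries the conjugate, and on which side the factors $p$, $\bar{s}$ and $Q$ sit) is precisely what determines every step above, so it must be stated and cited or proved. And the step in which the outer integral $\frac{1}{2\pi}\int_{\partial(U_p\cap\mathbb{C}_j)}f(p)\,dp_j\,(\bar{p}\,B-B\,s)(s^2-2\mathrm{Re}(p)s+|p|^2)^{-1}$ collapses to $B f(s)$ for an operator $B=S_L^{-1}(s,T)$ is not an application of the scalar Cauchy formula: the operator $B$ is interleaved between non-commuting quaternionic factors, and one needs a separate lemma on integrals of intrinsic functions against operator-valued kernels (the analogue of \cite[Lemma 4.2 ff.]{31}) to justify it. In short, the high-level plan is the standard and correct one, but the proposal as stated would not assemble into a proof without reversing the vanishing/surviving identification, redirecting the final representation to the left Cauchy integral, and supplying the two missing identities.
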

\begin{theorem}[{\cite[Theorem 4.2.1]{31}}, The spectral mapping theorem]
\label{3t8}
Let $T \in\mathcal{B}_R(X)$ and $f\in\mathcal{N}(\sigma_S(T))$. Then
$$\sigma_S(f(T))=f(\sigma_S(T)):=\{f(q):q\in\sigma_S(T)\}.$$
\end{theorem}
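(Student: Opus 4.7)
The plan is to adapt the classical spectral mapping argument by introducing, for each candidate $s\in\mathbb{H}$, the auxiliary function $g(q) := Q_s(f(q)) = f(q)^2 - 2\text{Re}(s)f(q) + \vert s\vert^2$. Because $f$ is intrinsic and the coefficients $2\text{Re}(s)$ and $\vert s\vert^2$ are real, $g$ is itself intrinsic slice hyperholomorphic on the domain of $f$, so applying the product rule (Theorem~\ref{3t7}) twice yields $g(T) = Q_s(f(T))$. Thus the invertibility of $Q_s(f(T))$ is controlled by the function $g$ via the S-functional calculus.

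For the inclusion $\sigma_S(f(T)) \subseteq f(\sigma_S(T))$, I would argue by contrapositive. Assume $s \notin f(\sigma_S(T))$. Since $f$ is intrinsic and $\sigma_S(T)$ is axially symmetric, $f(\sigma_S(T))$ is axially symmetric as well, and therefore $[s]\cap f(\sigma_S(T)) = \emptyset$. Because $g(q) = 0$ is equivalent to $f(q) \in [s]$, the function $g$ does not vanish on $\sigma_S(T)$, so $1/g$ is intrinsic slice hyperholomorphic on an axially symmetric neighborhood of $\sigma_S(T)$. The product rule then gives $(1/g)(T)\, g(T) = g(T)\,(1/g)(T) = I$, so $Q_s(f(T))$ is invertible and $s \in \rho_S(f(T))$.

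For the reverse inclusion $f(\sigma_S(T)) \subseteq \sigma_S(f(T))$, take $q_0 \in \sigma_S(T)$ and set $s_0 := f(q_0)$. A direct computation gives $g(q_0) = Q_{s_0}(s_0) = 0$, and since $g$ is intrinsic it must then vanish on the entire sphere $[q_0]$. I would next factor $g(q) = P(q)\,h(q)$ with $h \in \mathcal{N}(\sigma_S(T))$, where $P(q) = q - q_0$ if $q_0 \in \mathbb{R}$ and $P(q) = Q_{q_0}(q)$ otherwise. Applying the product rule once more yields $Q_{s_0}(f(T)) = P(T)\,h(T) = h(T)\,P(T)$; were this operator invertible, $P(T)$ would be invertible, and consequently $Q_{q_0}(T)$, which equals $P(T)^2$ in the real case and $P(T)$ in the non-real case, would also be invertible, contradicting $q_0 \in \sigma_S(T)$.

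The step I expect to require the most care is the factorization $g(q) = P(q)\,h(q)$ within the intrinsic class. The natural approach is to use the splitting lemma to restrict $g$ to a suitable plane $\mathbb{C}_j$, where it becomes a pair of holomorphic functions vanishing at $q_0$ (and at $\bar{q_0}$ when $q_0 \notin \mathbb{R}$); after dividing out the corresponding complex polynomial factor one reassembles the quotient on the whole axially symmetric domain via the representation formula and checks, using the compatibility conditions \eqref{3eq6}, that the resulting $h$ is intrinsic. Once this factorization lemma is in hand the rest of the proof is a routine combination of the product rule with standard spectral permanence.
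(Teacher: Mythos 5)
The paper states this result as a quotation from \cite[Theorem 4.2.1]{31} and gives no proof of its own, so there is no internal argument to compare against; your proposal is correct and is essentially the standard proof of that theorem. The key reductions all check out: $g(q)=Q_s(f(q))$ is intrinsic with $g(T)=Q_s(f(T))$ by the product rule; $g$ vanishes exactly where $f(q)\in[s]$, so $1/g\in\mathcal{N}(\sigma_S(T))$ when $[s]\cap f(\sigma_S(T))=\emptyset$ (using that $f(\sigma_S(T))$ is axially symmetric because $f$ is intrinsic); and for the reverse inclusion the factorization $g=Ph$ with $P=Q_{q_0}$ (or $P(q)=q-q_0$, with $Q_{q_0}(T)=P(T)^2$, in the real case) shows that invertibility of $Q_{f(q_0)}(f(T))=P(T)h(T)=h(T)P(T)$ would force invertibility of $Q_{q_0}(T)$. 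The one step you flag as delicate --- producing $h$ inside the intrinsic class --- is correctly handled by restricting to $\mathbb{C}_j$, where the intrinsic symmetry $g_j(\bar z)=\overline{g_j(z)}$ guarantees that $g_j$ vanishes at both $z_0$ and $\bar z_0$ and that the holomorphic quotient by $(z-z_0)(z-\bar z_0)$ again satisfies that symmetry, hence extends to an intrinsic slice hyperholomorphic $h$.
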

\begin{theorem}[{\cite[Theorem 4.2.4]{31}}, Composition rule]
\label{3t9}
Let $T \in\mathcal{B}_R(X)$ and $f\in\mathcal{N}(\sigma_S(T))$. If $g\in\mathcal{SH}_L(\sigma_S(f(T)))$, then $g\circ f\in\mathcal{SH}_L(\sigma_S(T))$, and if $g\in\mathcal{SH}_R(f(\sigma_S(T)))$, then $g\circ f\in\mathcal{SH}_R(\sigma_S(T))$. In both cases,
$$g(f(T)) = (g \circ f)(T).$$
\end{theorem}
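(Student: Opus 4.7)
The plan is to first verify that $g\circ f$ is left (respectively right) slice hyperholomorphic on a neighborhood of $\sigma_S(T)$, and then to establish $g(f(T))=(g\circ f)(T)$ by rewriting the Cauchy-type integral defining $g(f(T))$ as an iterated integral and applying the $S$-functional calculus to an auxiliary slice hyperholomorphic function involving $f$.

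For the first part, the decisive property is that intrinsic functions preserve slices: writing $f=f_0+jf_1$ with $f_0,f_1$ real-valued, one has $f(U\cap\mathbb{C}_j)\subset\mathbb{C}_j$ for every $j\in\mathbb{S}$. Restricting $g\circ f$ to the slice $U\cap\mathbb{C}_j$ and invoking the splitting lemma for $g$ on $\mathbb{C}_j$, the composition becomes holomorphic there. The compatibility identity $f(\bar q)=\overline{f(q)}$, valid for intrinsic $f$, then lets one reconstruct the slice decomposition of $g\circ f$ globally, yielding $g\circ f\in\mathcal{SH}_L(\sigma_S(T))$ (respectively $\mathcal{SH}_R(\sigma_S(T))$). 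The spectral mapping theorem (Theorem \ref{3t8}) guarantees that $g$ is defined on a neighborhood of $f(\sigma_S(T))=\sigma_S(f(T))$, so all domains are consistent.

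For the operator identity, I would choose bounded slice Cauchy domains $U_T\supset\sigma_S(T)$ and $V\supset\sigma_S(f(T))$ with $\overline{f(U_T)}\subset V$ and $V$ contained in the domain of $g$. The crucial step is the intermediate identity
\[
S_L^{-1}(s,f(T))=\frac{1}{2\pi}\int_{\partial(U_T\cap\mathbb{C}_j)}S_L^{-1}(p,T)\,ds_j\,S_L^{-1}(s,f(p)),\qquad s\in\partial(V\cap\mathbb{C}_j),
\]
which identifies the $S$-functional calculus of the left slice hyperholomorphic function $p\mapsto S_L^{-1}(s,f(p))$ at $T$ with the operator $S_L^{-1}(s,f(T))$. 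Once this is in hand, substituting into $g(f(T))=\frac{1}{2\pi}\int_{\partial(V\cap\mathbb{C}_j)}S_L^{-1}(s,f(T))\,ds_j\,g(s)$ and exchanging the order of integration by Fubini produces, for each fixed $p$, the inner $s$-integral $\frac{1}{2\pi}\int_{\partial(V\cap\mathbb{C}_j)}S_L^{-1}(s,f(p))\,ds_j\,g(s)=g(f(p))$ by Cauchy's formula, so the outer $p$-integral becomes precisely the defining integral of $(g\circ f)(T)$.

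The main obstacle is the intermediate identity. Its proof leans on the intrinsic character of $f$: the function $p\mapsto Q_s(f(p))=f(p)^2-2\text{Re}(s)f(p)+|s|^2$ is intrinsic, and since $s\notin f(\sigma_S(T))=\sigma_S(f(T))$ it does not vanish on a neighborhood of $\sigma_S(T)$, so its pointwise reciprocal $(Q_s\circ f)^{-1}$ is intrinsic slice hyperholomorphic there. Factoring $S_L^{-1}(s,f(p))=-(Q_s\circ f)^{-1}(p)\bigl(f(p)-\bar s\bigr)$ as a product of an intrinsic function and a left slice hyperholomorphic function, the product rule (Theorem \ref{3t7}) applied first to $(Q_s\circ f)\cdot(Q_s\circ f)^{-1}\equiv 1$ yields $\bigl[(Q_s\circ f)^{-1}\bigr](T)=Q_s(f(T))^{-1}$, and a second application produces $-Q_s(f(T))^{-1}(f(T)-\bar sI)=S_L^{-1}(s,f(T))$, which is exactly the value of the iterated integral above. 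The right slice case is symmetric, using the right Cauchy formula and the factorization $S_R^{-1}(s,f(p))=-(f(p)-\bar s)(Q_s\circ f)^{-1}(p)$.
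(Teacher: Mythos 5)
The paper does not prove this theorem: it is quoted verbatim from \cite[Theorem 4.2.4]{31} as a preliminary, so there is no internal proof to compare against. Your argument is a correct reconstruction of the standard proof from that reference --- the slice-preservation property $f(U\cap\mathbb{C}_j)\subset\mathbb{C}_j$ of intrinsic functions for the hyperholomorphy of $g\circ f$, the identity $\bigl[S_L^{-1}(s,f(\cdot))\bigr](T)=S_L^{-1}(s,f(T))$ obtained from the product rule applied to the factorization through $(Q_s\circ f)^{-1}$, and the Fubini exchange combined with Cauchy's formula. The only blemish is notational: in your intermediate identity the integration variable on $\partial(U_T\cap\mathbb{C}_j)$ is $p$, so the measure should read $dp_j$ rather than $ds_j$.
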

A bounded right projection $P\in\mathcal{B}_R(X)$ (or simply projection when no confusion can arise) is such that $P^2=P$. If $P$ is a projection, then so is $I-P$, and their null spaces and ranges are related as follows:
$$\mathcal{R}(P) = \mathcal{N}(I-P) \text{ and } \mathcal{N}(P) = \mathcal{R}(I-P).$$
The range and the kernel form a pair of algebraic complements,
$$\mathcal{R}(P) + \mathcal{N}(P) = X \text{ and } \mathcal{R}(P)\cap \mathcal{N}(P)=\{0\}.$$
\section{Generalized inverse}
In this section, we study the generalized invertibility of right linear operators on quaternionic Banach spaces.
\begin{definition}
An operator $B\in\mathcal{B}_R(X)$ is called a generalized inverse of $A\in\mathcal{B}_R(X)$ if $ABA=A$ and $BAB=B$.
\end{definition}
\begin{remark}
\label{3r1}
\emph{Let $A,B\in\mathcal{B}_R(X)$.
\begin{itemize}
\item[1)] If $B$ is a generalized inverse of $A$, then $AB$ and $BA$ are projections. Indeed 
$$(AB)^2 = (ABA)B = AB\text{, } (BA)^2=B(ABA)=BA.$$
Clearly, $\mathcal{R}(AB)\subseteq\mathcal{R}(A)$ and $\mathcal{N}(A)\subseteq\mathcal{N}(BA)$. Since $ABA=A$, we have $\mathcal{R}(A)=\mathcal{R}(ABA)\subseteq\mathcal{R}(AB)$ and $\mathcal{N}(BA)\subseteq\mathcal{N}(ABA)=\mathcal{N}(A)$. It follows that $\mathcal{R}(AB)=\mathcal{R}(A)$ and $\mathcal{N}(BA)=\mathcal{N}(A)$. Similarly, $\mathcal{R}(BA)=\mathcal{R}(B)$ and $\mathcal{N}(AB)=\mathcal{N}(B)$. Since $X=\mathcal{R}(AB)\oplus \mathcal{N}(AB)$, $X=\mathcal{R}(A)\oplus \mathcal{N}(B)$.
\item[2)] If $ABA=A$, then $T:=BAB$ is a generalized inverse of $A$.
\item[3)] If $A$ is left $($resp. right$)$ invertible, then it is generalized invertible.
\end{itemize}}
\end{remark}
\begin{theorem}
An operator $A\in\mathcal{B}_R(X)$ has a generalized inverse if and only if both the range $\mathcal{R}(A)$ and the null space $\mathcal{N}(A)$ are closed complemented subspaces of $X$.
\end{theorem}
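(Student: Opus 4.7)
The proof splits naturally into the two implications. For the forward direction, assuming $B$ is a generalized inverse of $A$, I would simply invoke Remark~\ref{3r1}: both $AB$ and $I-BA$ are bounded right linear projections on $X$, with $\mathcal{R}(AB)=\mathcal{R}(A)$ and $\mathcal{R}(I-BA)=\mathcal{N}(BA)=\mathcal{N}(A)$. Since the range of any bounded right projection is automatically closed and complemented by its kernel, this gives $X=\mathcal{R}(A)\oplus\mathcal{N}(B)$ and $X=\mathcal{N}(A)\oplus\mathcal{R}(B)$, settling one direction without further work.

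For the converse, suppose there exist closed right subspaces $M,N\subseteq X$ with $X=\mathcal{N}(A)\oplus M$ and $X=\mathcal{R}(A)\oplus N$, and let $P,Q\in\mathcal{B}_R(X)$ be the right linear projections onto $\mathcal{R}(A)$ along $N$ and onto $M$ along $\mathcal{N}(A)$, respectively. The restriction $A_{0}:=A|_{M}\colon M\to\mathcal{R}(A)$ is a bounded right linear bijection: surjectivity follows from $\mathcal{R}(A)=A(\mathcal{N}(A)+M)=A(M)$, and injectivity from $M\cap\mathcal{N}(A)=\{0\}$. Since $M$ and $\mathcal{R}(A)$ are closed in $X$, they are themselves right quaternionic Banach spaces, and the open mapping theorem — whose standard Baire category proof transfers verbatim to right linear maps between right quaternionic Banach spaces — supplies a bounded inverse $A_{0}^{-1}\colon\mathcal{R}(A)\to M$.

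I would then set $B:=A_{0}^{-1}P\in\mathcal{B}_R(X)$. Direct computation shows $AB=P$ (because $A_{0}^{-1}P$ takes values in $M$, where $A$ coincides with $A_{0}$) and $BA=Q$ (because $BA$ vanishes on $\mathcal{N}(A)$ and restricts to the identity on $M$), whence $ABA=PA=A$ and $BAB=QB=B$, the last equality holding because $B$ has range in $M=\mathcal{R}(Q)$. The only nontrivial ingredient is the appeal to the quaternionic open mapping theorem to guarantee boundedness of $A_{0}^{-1}$; the remaining identities are a mechanical unwinding of the projection decompositions.
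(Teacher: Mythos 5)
Your proof is correct and takes essentially the same route as the paper: the forward implication via Remark~\ref{3r1} and the bounded projections $AB$ and $BA$, and the converse by inverting $A$ on a closed complement of $\mathcal{N}(A)$ and composing with the projection onto $\mathcal{R}(A)$ (the paper writes this same operator as $B(Az+y)=z$). The only difference is that you explicitly justify the boundedness of $B$ through the quaternionic open mapping theorem, a point the paper passes over with ``it is easy to check.''
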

\begin{proof}
Let $Y$ and $Z$ be two closed complemented subspaces of $X$. If $X=\mathcal{R}(A)\oplus Y=\mathcal{N}(A)\oplus Z$, then the operator defined by $B(Az+y)=z$ with $z\in Z$ is right linear on $X$. It is easy to check that $B$ is a generalized inverse of $A$.\\
Conversely, if $A$ has a generalized inverse $B\in\mathcal{B}_R(X)$, then by Remark \ref{3r1}, $\mathcal{R}(AB)=\mathcal{R}(A)$ and $\mathcal{N}(BA)=\mathcal{N}(A)$. Since $AB$ and $BA$ are bounded right projections, both the range $\mathcal{R}(AB)$ and the null space $\mathcal{N}(BA)$ are closed complemented subspaces of $X$, then so is $\mathcal{R}(A)$ and $\mathcal{N}(A)$.
\end{proof}
The generalized inverse is not unique in general, the following theorem describes all generalized inverses of $A\in\mathcal{B}_R(X)$. 
\begin{theorem}
\label{3t4}
Suppose $B\in\mathcal{B}_R(X)$ is a generalized inverse of $A\in\mathcal{B}_R(X)$. Then the set of all generalized inverses of $A$ consists of all operators of the form:
$$ T= PBQ,$$
where $Q$ is a projection onto $\mathcal{R}(A)$ and $P$ is a projection whose kernel coincides with $\mathcal{N}(A)$. 
\end{theorem}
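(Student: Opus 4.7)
The plan is to prove the two inclusions separately, using only the idempotence of $P,Q$, the identifications $\mathcal{R}(Q)=\mathcal{R}(A)$ and $\mathcal{N}(P)=\mathcal{N}(A)$, together with the defining relations $ABA=A$, $BAB=B$, $ATA=A$, $TAT=T$.

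For the inclusion that every $T=PBQ$ is a generalized inverse of $A$, I would first extract two basic identities from the hypotheses on $P$ and $Q$. Since $Q$ is idempotent with range exactly $\mathcal{R}(A)$, one has $Qy=y$ for every $y\in\mathcal{R}(A)$; applying this to $y=Ax$ gives $QA=A$. Symmetrically, since $P$ is idempotent with kernel exactly $\mathcal{N}(A)$, for any $x\in X$ the vector $x-Px$ lies in $\mathcal{N}(P)=\mathcal{N}(A)$, so $A(x-Px)=0$, that is $AP=A$. With these two equalities the computation is automatic: $ATA=A(PBQ)A=(AP)B(QA)=ABA=A$, and $TAT=(PBQ)A(PBQ)=PB(QA)(P)BQ=PB(AP)BQ=P(BAB)Q=PBQ=T$.

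For the converse inclusion, given an arbitrary generalized inverse $T$ of $A$, I would produce the required factorization by taking $Q:=AT$ and $P:=TA$. Remark \ref{3r1}, applied with the roles $(A,B)$ played by $(A,T)$, tells us that $AT$ and $TA$ are bounded projections with $\mathcal{R}(AT)=\mathcal{R}(A)$ and $\mathcal{N}(TA)=\mathcal{N}(A)$, so $P$ and $Q$ have the required properties. It then only remains to verify
$$PBQ=(TA)B(AT)=T(ABA)T=TAT=T,$$
where the middle equality uses that $B$ is a generalized inverse of $A$ (so $ABA=A$) and the last uses that $T$ is a generalized inverse of $A$ (so $TAT=T$). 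Thus every generalized inverse of $A$ has the prescribed form.

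There is essentially no analytic obstacle here; the only thing one has to be watchful of is the bookkeeping of the projections $P$ and $Q$. The key conceptual point is the translation of the geometric conditions ``$\mathcal{R}(Q)=\mathcal{R}(A)$'' and ``$\mathcal{N}(P)=\mathcal{N}(A)$'' into the algebraic identities $QA=A$ and $AP=A$. Once those are in hand, both directions collapse to short chains of substitutions using $ABA=A$ and $BAB=B$ (or $TAT=T$), and no appeal to the slice hyperholomorphic machinery or to topological completeness is needed beyond the boundedness of $P,Q$ already built into the statement.
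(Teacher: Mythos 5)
Your proposal is correct and follows essentially the same route as the paper's proof: both directions rest on the identities $QA=A=AP$ for the ``sufficiency'' part and on the choice $P=TA$, $Q=AT$ together with Remark \ref{3r1} and the factorization $T=T(ABA)T=(TA)B(AT)$ for the ``necessity'' part. Your write-up merely makes explicit the derivation of $QA=A$ and $AP=A$ from the range and kernel conditions, which the paper states without comment.
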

\begin{proof}
Let $B$ and $T$ be two generalized inverses of $A$, then $ABA=ATA=A$ and $TAT=T$, hence $T(ABA)T=(TA)B(AT)=T$. By Remark \ref{3r1}, $TA$ and $AT$ are projections and such that $\mathcal{N}(TA)=\mathcal{N}(A)$ and $\mathcal{R}(A)=\mathcal{R}(AT)$.\\
Conversely, if $B$, $P$ and $Q$ are as in Theorem \ref{3t4}, then $QA=A=AP$, hence $A(PBQ)A=A$ and $(PBQ)A(PBQ)=PBQ$.
\end{proof}
\begin{lemma}
\label{3l2}
Let $A\in\mathcal{B}_R(X)$. Then $A$ has a generalized inverse $B$ such that $AB=BA$ if and only if $X$ can be written as $X = \mathcal{R}(A)\oplus\mathcal{N}(A)$. In such a case, $B$ is unique.
\end{lemma}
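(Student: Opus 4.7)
The plan is to prove the two directions separately and then establish uniqueness, relying throughout on Remark \ref{3r1} for the basic projection properties of $AB$ and $BA$.

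For the necessity direction, suppose $B$ is a generalized inverse of $A$ with $AB=BA$. By Remark \ref{3r1} both $AB$ and $BA$ are projections; call this common projection $P$. Remark \ref{3r1} also gives $\mathcal{R}(P)=\mathcal{R}(AB)=\mathcal{R}(A)$ and $\mathcal{N}(P)=\mathcal{N}(BA)=\mathcal{N}(A)$. Since any bounded projection yields the direct sum decomposition $X=\mathcal{R}(P)\oplus\mathcal{N}(P)$, this immediately yields $X=\mathcal{R}(A)\oplus\mathcal{N}(A)$.

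For the sufficiency direction, assume $X=\mathcal{R}(A)\oplus\mathcal{N}(A)$, and let $P$ be the (bounded, right linear) projection onto $\mathcal{R}(A)$ along $\mathcal{N}(A)$. The operator $A$ leaves $\mathcal{R}(A)$ invariant, and its restriction $A_0:=A|_{\mathcal{R}(A)}\colon\mathcal{R}(A)\to\mathcal{R}(A)$ is bijective: injectivity follows from $\mathcal{R}(A)\cap\mathcal{N}(A)=\{0\}$, and surjectivity follows because any $Av\in\mathcal{R}(A)$ can be written as $Av=Av_1$ with $v_1\in\mathcal{R}(A)$ (decompose $v=v_1+v_2$ with $v_2\in\mathcal{N}(A)$). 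The closed subspace $\mathcal{R}(A)$ is itself a right quaternionic Banach space, so the quaternionic open mapping theorem gives that $A_0^{-1}$ is bounded. Define $B:=A_0^{-1}P$, that is, $B(x_1+x_2):=A_0^{-1}x_1$ for $x_1\in\mathcal{R}(A),x_2\in\mathcal{N}(A)$. Then $B$ is bounded and right linear, and a direct computation shows $AB=BA=P$, from which $ABA=PA=A$ (since $\mathcal{R}(A)\subseteq\mathcal{R}(P)$) and $BAB=PB=B$ (since $\mathcal{R}(B)\subseteq\mathcal{R}(A)=\mathcal{R}(P)$). So $B$ is a commuting generalized inverse of $A$.

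For uniqueness, suppose $B_1,B_2$ are both generalized inverses of $A$ satisfying $AB_i=B_iA$. By the necessity direction applied to each, the projections $P_i:=AB_i=B_iA$ both have range $\mathcal{R}(A)$ and kernel $\mathcal{N}(A)$, so $P_1=P_2=:P$. Moreover $B_i=B_iAB_i=PB_i=B_iP$, hence $\mathcal{R}(B_i)\subseteq\mathcal{R}(A)$ and $B_i$ vanishes on $\mathcal{N}(A)$. It therefore suffices to show $B_1$ and $B_2$ agree on $\mathcal{R}(A)$: for $y\in\mathcal{R}(A)$ we have $AB_1y=Py=y=AB_2y$, so $A(B_1y-B_2y)=0$ with $B_1y-B_2y\in\mathcal{R}(A)$, forcing $B_1y=B_2y$ by $\mathcal{R}(A)\cap\mathcal{N}(A)=\{0\}$. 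Thus $B_1=B_2$.

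The main subtlety is the sufficiency step, where the construction of $B$ rests on the boundedness of $A_0^{-1}$; this requires invoking the open mapping theorem on the right quaternionic Banach space $\mathcal{R}(A)$, which is the only point where one must check that a classical complex result transfers verbatim to the quaternionic setting.
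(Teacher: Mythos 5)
Your proof is correct and follows essentially the same route as the paper: necessity via the projection $AB=BA$ having range $\mathcal{R}(A)$ and kernel $\mathcal{N}(A)$, and sufficiency by inverting the restriction $A|_{\mathcal{R}(A)}$ and setting $B=A_0^{-1}\oplus 0$ (you are in fact more careful than the paper in flagging the open mapping theorem for boundedness of $A_0^{-1}$). The only cosmetic difference is uniqueness, which the paper settles by the algebraic chain $B=AB^2=CA^2B^2=CAB=C^2A^2B=C^2A=C$ rather than by comparing the two operators on the summands $\mathcal{R}(A)$ and $\mathcal{N}(A)$; both arguments are valid.
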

\begin{proof}
By Remark \ref{3r1}, $X=\mathcal{R}(A)\oplus \mathcal{N}(B)$ and since $AB=BA$, $\mathcal{N}(B)=\mathcal{N}(AB)=\mathcal{N}(BA)=\mathcal{N}(A)$, then $X=\mathcal{R}(A)\oplus \mathcal{N}(A)$.\\
Conversely, since $X = \mathcal{R}(A)\oplus\mathcal{N}(A)$, the restriction $T:=A/\mathcal{R}(A)$ is invertible, then $B=T^{-1}\oplus 0$ is a generalized inverse of $A$ which commutes with $A$. If $B$ and $C$ are two generalized inverses of $A$ that commute with $A$, then $B=AB^2=CA^2B^2=CAB=C^2A^2B=C^2A=C$.
\end{proof}
\begin{theorem}
Suppose $A\in\mathcal{B}_R(X)$ with generalized inverse $B$ such that $AB=BA$. Then 
$$\sigma_S(B)\setminus\{0\}=\{q^{-1}:q\in\sigma_S(A)\setminus\{0\}\}.$$
\end{theorem}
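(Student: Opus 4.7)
The plan is to reduce the statement to the invertible case by exploiting the decomposition $X=\mathcal{R}(A)\oplus\mathcal{N}(A)$ supplied by Lemma~\ref{3l2}. The proof of that lemma also yields an explicit form of the unique commuting generalized inverse: $B=T^{-1}\oplus 0$, where $T:=A\vert_{\mathcal{R}(A)}$ is an invertible right linear operator on $\mathcal{R}(A)$. First I would verify that $\mathcal{R}(A)$ and $\mathcal{N}(A)$ are both $B$-invariant: if $x=Ay\in\mathcal{R}(A)$, then $Bx=BAy=ABy\in\mathcal{R}(A)$; and if $x\in\mathcal{N}(A)$, then $ABx=BAx=0$, so $Bx\in\mathcal{N}(A)$. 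Consequently $A$ and $B$ decompose simultaneously as claimed.

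Now fix $q\in\mathbb{H}\setminus\{0\}$. Using the decomposition, $Q_q(A)=A^2-2\text{Re}(q)A+\vert q\vert^2 I$ splits into $Q_q(T)$ on $\mathcal{R}(A)$ and $\vert q\vert^2 I$ on $\mathcal{N}(A)$. Since $\vert q\vert^2>0$, the second block is automatically invertible, so $q\in\sigma_S(A)$ iff $Q_q(T)$ is not invertible on $\mathcal{R}(A)$. Analogously, $Q_{q^{-1}}(B)$ restricts to $Q_{q^{-1}}(T^{-1})$ on $\mathcal{R}(A)$ and to $\vert q\vert^{-2}I$ on $\mathcal{N}(A)$, so $q^{-1}\in\sigma_S(B)$ iff $Q_{q^{-1}}(T^{-1})$ is not invertible. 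The crux of the proof is then the algebraic identity
$$Q_{q^{-1}}(T^{-1})=\frac{1}{\vert q\vert^2}\,T^{-2}\,Q_q(T),$$
which I would verify by expansion using $\text{Re}(q^{-1})=\text{Re}(q)/\vert q\vert^2$ and $\vert q^{-1}\vert^2=1/\vert q\vert^2$. Since the factor $\vert q\vert^{-2}T^{-2}$ is invertible on $\mathcal{R}(A)$, the identity equates the invertibility of the two sides.

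Stringing these equivalences together yields $q\in\sigma_S(A)\iff q^{-1}\in\sigma_S(B)$ for every $q\neq 0$, and since $q^{-1}$ is automatically nonzero this is exactly the claimed equality $\sigma_S(B)\setminus\{0\}=\{q^{-1}:q\in\sigma_S(A)\setminus\{0\}\}$. The only real point that requires care, and the main obstacle to a clean write-up, is the block reduction itself: although closed and complemented, the subspaces $\mathcal{R}(A)$ and $\mathcal{N}(A)$ need not be two-sided quaternionic subspaces of $X$. This is harmless because $Q_q$ involves only real scalar coefficients, so the restrictions belong naturally to $\mathcal{B}_R(\mathcal{R}(A))$ and $\mathcal{B}_R(\mathcal{N}(A))$ and bijectivity on $X$ is equivalent to bijectivity on each summand. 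A more high-powered alternative would be to construct an intrinsic slice hyperholomorphic function that is $0$ near $0$ and $q\mapsto q^{-1}$ near $\sigma_S(A)\setminus\{0\}$ (using the fact that $0$ is isolated in $\sigma_S(A)$, once again a consequence of the decomposition) and conclude via the spectral mapping theorem~\ref{3t8}, but the direct block computation above is shorter.
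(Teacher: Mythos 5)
Your proposal is correct, and it follows the same opening reduction as the paper: both invoke Lemma~\ref{3l2} to write $X=\mathcal{R}(A)\oplus\mathcal{N}(A)$, $A=T\oplus 0$, $B=T^{-1}\oplus 0$, and both observe that the block $Q_q(0)=\vert q\vert^2 I$ on $\mathcal{N}(A)$ is invertible for $q\neq 0$, so everything comes down to showing $\sigma_S(T^{-1})=\{q^{-1}:q\in\sigma_S(T)\}$. Where you diverge is in the last step: the paper notes that $q\mapsto q^{-1}$ is intrinsic slice hyperholomorphic and applies the spectral mapping theorem (Theorem~\ref{3t8}) to $T$, whereas you establish the same equality by the elementary identity $Q_{q^{-1}}(T^{-1})=\vert q\vert^{-2}T^{-2}Q_q(T)$, which checks out (using $\mathrm{Re}(q^{-1})=\mathrm{Re}(q)/\vert q\vert^2$, $\vert q^{-1}\vert^2=1/\vert q\vert^2$, and the fact that $T^{-2}$ commutes with $Q_q(T)$). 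Your route is more elementary, needing nothing beyond the definition of the S-spectrum, and it has a genuine technical advantage that you yourself flag: $\mathcal{R}(A)$ is a priori only a right-linear subspace of $X$, so invoking the S-functional calculus and spectral mapping theorem for $T\in\mathcal{B}_R(\mathcal{R}(A))$ (as the paper implicitly does) requires some care about the two-sided structure, while your computation only uses that $Q_q$ has real coefficients. The paper's route, in exchange, is shorter on the page and recycles machinery already set up for the rest of the article.
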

\begin{proof}
By Lemma \ref{3l2}, $X = \mathcal{R}(A)\oplus\mathcal{N}(A)$, then $A=T\oplus 0$ on $\mathcal{R}(A)\oplus\mathcal{N}(A)$ and $B=T^{-1}\oplus 0$. We have $Q_q(B)=Q_q(T^{-1})\oplus Q_q(0)$, for all $q\in\mathbb{H}$. Then we have $\sigma_S(B)=\sigma_S(T^{-1})\cup\sigma_S(0)$, since $Q_q(0)=\vert q\vert^2 I$ is always invertible where $I$ is the identity operator on $\mathcal{N}(A)$,
$$\sigma_S(B)\setminus\{0\}=\sigma_S(T^{-1})\setminus\{0\}.$$
The function $f:\mathbb{H}\setminus\{0\}\ni q\mapsto q^{-1}$ is intrinsic slice hyperholomorphic (because $q^{-1}=\frac{\bar{q}}{\vert q\vert^2}$), then by Theorem \ref{3t8}, $\sigma_S(T^{-1})=\sigma_S(f(T))=\{q^{-1}:q\in\sigma_S(T)\}$. Thus
$$\sigma_S(B)\setminus\{0\}=\{q^{-1}:q\in\sigma_S(A)\setminus\{0\}\}.$$
\end{proof}
\section{Drazin inverse}
In this section, we study the Drazin invertibility of right linear operators acting on a quaternionic Banach space.
\begin{definition}
\label{3d1}
Let $A\in\mathcal{B}_R(X)$. An element $B\in\mathcal{B}_R(X)$ is a Drazin inverse of $A$, written $B = A^d$, if
\begin{equation}
 AB=BA\text{, } AB^2=B\text{, } A^{k+1}B=A^k,\label{3eq0}
\end{equation}
for some nonnegative integer $k$. The least nonnegative integer $k$ for which these equations hold is the Drazin index $i(A)$ of $A$.
\end{definition}
\begin{definition}
An element $A$ of $\mathcal{B}_R(X)$ is called quasinilpotent if $\sigma_S(A)=\{0\}$. The set of all quasinilpotent elements in $\mathcal{B}_R(X)$ will be denoted by $QN(\mathcal{B}_R(X))$.
\end{definition}
\begin{proposition}
An element $A$ of $\mathcal{B}_R(X)$ is quasinilpotent if and only if, for every $T$ commuting with $A$, we have $I-TA$ is invertible.
\end{proposition}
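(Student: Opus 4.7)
The plan is to prove the equivalence by translating it into a statement about the S-spectrum. The key structural facts I will exploit are: (i) the spectral radius formula $r_S(A)=\lim_n\|A^n\|^{1/n}$ from Theorem \ref{3t6}; (ii) the defining identity $Q_q(A)=A^2-2\mathrm{Re}(q)A+|q|^2 I$; and (iii) the observation that $I-TA$ is, up to a normalization, exactly $Q_1(TA)$ when one expands $(I-TA)^2$.

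\emph{Forward direction.} Assume $\sigma_S(A)=\{0\}$ and let $T\in\mathcal{B}_R(X)$ with $TA=AT$. Commutativity gives $(TA)^n=T^nA^n$, hence
$$\|(TA)^n\|^{1/n}\le\|T\|\,\|A^n\|^{1/n}.$$
Passing to the limit with Theorem \ref{3t6} yields $r_S(TA)\le\|T\|\,r_S(A)=0$, so $\sigma_S(TA)=\{0\}$. In particular $1\in\rho_S(TA)$, meaning that
$$Q_1(TA)=(TA)^2-2(TA)+I=(I-TA)^2$$
is invertible in $\mathcal{B}_R(X)$. Since the square of $S:=I-TA$ has a two-sided inverse $R$, the operators $SR$ and $RS$ are the right and left inverses of $S$, so $I-TA$ itself is invertible.

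\emph{Converse.} I will argue by contrapositive: suppose $A$ is not quasinilpotent, so there exists $q\in\sigma_S(A)\setminus\{0\}$. The idea is to read off a commuting witness directly from the polynomial form of $Q_q$. Set
$$T:=\frac{1}{|q|^2}\bigl(2\mathrm{Re}(q)\,I-A\bigr).$$
Since $T$ is a real-linear combination of $I$ and $A$, the two-sided structure of $X$ implies $T\in\mathcal{B}_R(X)$ and $TA=AT$. A direct computation yields
$$I-TA=\frac{1}{|q|^2}\bigl(|q|^2 I-2\mathrm{Re}(q)\,A+A^2\bigr)=\frac{1}{|q|^2}Q_q(A),$$
which fails to be invertible because $q\in\sigma_S(A)$. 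This produces the required commuting $T$ with $I-TA$ noninvertible.

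The only technical point is verifying $r_S(TA)\le\|T\|\,r_S(A)$ for commuting operators, but this is immediate from Theorem \ref{3t6} together with submultiplicativity of the norm; the converse is essentially a one-line algebraic construction, so no serious obstacle arises.
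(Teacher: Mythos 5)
Your proof is correct and follows essentially the same route as the paper: the same witness $T=\frac{1}{|q|^2}(2\mathrm{Re}(q)I-A)$ turning $I-TA$ into $\frac{1}{|q|^2}Q_q(A)$ in one direction, and the spectral-radius bound for the commuting product $TA$ via Theorem \ref{3t6} in the other. The only (minor) variation is at the last step of the forward direction: the paper invokes the spectral mapping theorem to get $\sigma_S(I-TA)=\{1\}$, whereas you observe directly that $Q_1(TA)=(I-TA)^2$ and extract one-sided inverses of $I-TA$ from the inverse of its square --- a tidy shortcut, but not a different argument.
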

\begin{proof}
Let $A\in\mathcal{B}_R(X)$, assume that for every $T\in\mathcal{B}_R(X)$ commuting with $A$, we have $I-TA$ is invertible. Let $T=\frac{-1}{\vert q\vert^2}A+\frac{2\text{Re}(q)}{\vert q\vert^2}I$ with $q\in\mathbb{H}\setminus\{0\}$, clearly $T$ commutes with $A$ and $I-TA=\frac{1}{\vert q\vert^2}[A^2-2\text{Re}(q)A+\vert q\vert^2I]$ is invertible, hence $\sigma_S(A)=\{0\}$.\\
Conversely, if $\sigma_S(A)=\{0\}$. Let $T\in\mathcal{B}_R(X)$ commutes with $A$, then by Theorem \ref{3t6}, $r_S(TA)\leq r_S(T)r_S(A)=0$ and hence $\sigma_S(TA)=\{0\}$. Then by Theorem \ref{3t8}, $\sigma_S(I-TA)=\{1\}$ and hence $I-TA$ is invertible.
\end{proof}
An operator $A\in\mathcal{B}_R(X)$ is said to be nilpotent if there exists $k\in\mathbb{N}$ such that $A^k=0$. The least nonnegative integer $k$ for which $A^k=0$ is called the nilpotency index of $A$ and the set of all nilpotent elements in $\mathcal{B}_R(X)$ is denoted by $N(\mathcal{B}_R(X))$.
\begin{lemma}
In $\mathcal{B}_R(X)$, $(\ref{3eq0})$ is equivalent to
\begin{equation}
AB=BA, AB^2=B, A-A^2B\in N(\mathcal{B}_R(X)).\label{3eq1}
\end{equation}
The Drazin index $i(A)$ is equal to the nilpotency index of $A-A^2B$.
\end{lemma}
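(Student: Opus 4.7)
The plan is to introduce the operator $N := A - A^2B$ and exploit the fact, valid as soon as the first two equations of $(\ref{3eq0})$ or $(\ref{3eq1})$ hold, that $AB$ is an idempotent commuting with $A$. This will reduce the whole statement to a clean identity expressing powers of $N$ in terms of powers of $A$ and the idempotent $I-AB$.

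First I would verify that under $AB=BA$ and $AB^2=B$, the operator $AB$ is a projection: indeed
\[
(AB)^2 = A(BA)B = A(AB)B = A^2B^2 = A(AB^2) = AB.
\]
Since $A$ commutes with $B$, it also commutes with $AB$, and hence with $I-AB$. Writing $N = A(I-AB)$, a short induction gives the key identity
\[
N^n = A^n(I-AB) = A^n - A^{n+1}B, \qquad n\ge 1,
\]
because $A$ and $I-AB$ commute and $(I-AB)^2 = I-AB$.

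From this identity both implications are immediate. If $(\ref{3eq0})$ holds with the given $k$, then $N^k = A^k - A^{k+1}B = 0$, so $N$ is nilpotent of index at most $k$. Conversely, if $N^m = 0$, then $A^m = A^{m+1}B$, which is the third relation of $(\ref{3eq0})$ with $k = m$. Hence the two sets of conditions are equivalent.

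For the index assertion, let $k$ be the least integer for which $A^{k+1}B = A^k$ and let $m$ be the nilpotency index of $N$. The first direction gives $N^k = 0$, so $m \le k$, while the second gives $A^{m+1}B = A^m$, so $k \le m$; hence $k = m$. No serious obstacle arises here: the only subtlety is to recognize at the outset that $AB$ is idempotent so that the binomial expansion of $N^n$ collapses to $A^n(I-AB)$, after which the proof is a one-line computation in each direction.
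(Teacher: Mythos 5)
Your proposal is correct and follows essentially the same route as the paper: the paper's proof also observes that $I-AB$ is a projection commuting with $A$ and rests on the identity $(A-A^2B)^k=A^k(I-AB)^k=A^k(I-AB)=A^k-A^{k+1}B$, which is exactly your key computation, merely stated more tersely. Your write-up just makes explicit the induction and the two-sided comparison of indices that the paper leaves implicit.
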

\begin{proof}
If $AB=BA$ and $AB^2=B$, then $I-AB$ is a projection. Hence the equivalence and the last statement are given by this equalities $(A-A^2B)^k=A^k(I-AB)^k=A^k(I-AB)=A^k-A^{k+1}B$.
\end{proof}
Koliha \cite[Definition 2.3]{36} generalized the notion of Drazin invertibility in a complex Banach algebra. According to this definition one can generalize the notion of Drazin invertibility in $\mathcal{B}_R(X)$.
\begin{definition}
Let $A\in\mathcal{B}_R(X)$. An element $B\in\mathcal{B}_R(X)$ is a generalized Drazin inverse of $A$, written $B = A^D$, if
\begin{equation}
AB=BA, AB^2=B, A-A^2B\in QN(\mathcal{B}_R(X)). \label{3eq2}
\end{equation}
\end{definition}
\begin{lemma}
\label{3l1}
In $\mathcal{B}_R(X)$, an element $A$ has a Drazin $($resp. generalized Drazin$)$ inverse if and only if there is a projection $P$ commuting with $A$ such that
\begin{equation}
AP\in N(\mathcal{B}_R(X))(\text{resp. }AP\in QN(\mathcal{B}_R(X)))\text{ and } A +P\text{ is invertible}.\label{3eq3}
\end{equation}
A Drazin $($resp. generalized Drazin$)$ inverse of $A$ is given by
\begin{equation}
A^d = (A+P)^{-1}(I-P)(\text{resp. } A^D = (A+P)^{-1}(I-P)) \label{3eq4}.
\end{equation}
\end{lemma}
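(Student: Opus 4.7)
The plan is to prove both implications simultaneously for the Drazin and generalized Drazin cases, since the algebra is identical and only the type of ``smallness'' of $A-A^2B$ changes. The key algebraic identities I need are: $P$ is a projection iff $P^2=P$; $AB=BA$ implies $AB$ commutes with every polynomial in $A$; and the spectral mapping theorem (Theorem \ref{3t8}) for the intrinsic slice hyperholomorphic map $q\mapsto 1+q$.

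For the forward direction, given a (generalized) Drazin inverse $B$, I set $P:=I-AB$. The relation $AB^2=B$ gives $(AB)^2=A(AB^2)=AB$, hence $P$ is a projection. Because $AB=BA$, $AB$ commutes with $A$, so $P$ does too. A direct computation yields $AP = A-A^2B$, which by the defining relation \eqref{3eq1} (resp. \eqref{3eq2}) lies in $N(\mathcal{B}_R(X))$ (resp. $QN(\mathcal{B}_R(X))$). For the invertibility of $A+P$, first observe that $PB=(I-AB)B=B-AB^2=0$. Using commutation of $A$, $B$, $P$, expand
$$(A+P)(B+P)=(B+P)(A+P)=AB+AP+PB+P^{2}=(AB+P)+AP=I+AP.$$
Since $AP$ is (quasi)nilpotent, $I+AP$ is invertible: in the nilpotent case by the finite Neumann series, and in the quasinilpotent case by Theorem \ref{3t8} applied to the intrinsic function $q\mapsto 1+q$, giving $\sigma_S(I+AP)=\{1\}$. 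Hence $A+P$ is invertible. Finally $(A+P)B=AB+PB=AB=I-P$ yields the formula $B=(A+P)^{-1}(I-P)$.

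For the converse, assume $P$ is a projection commuting with $A$ with $AP\in N(\mathcal{B}_R(X))$ (resp. $QN(\mathcal{B}_R(X))$) and $A+P$ invertible. Define $B:=(A+P)^{-1}(I-P)$. Since $P$ commutes with $A+P$, it commutes with $(A+P)^{-1}$, hence with $B$, and in particular $B$ commutes with $A$. Left-multiplying the definition of $B$ by $A+P$ gives $(A+P)B=I-P$. Computing $PB=(A+P)^{-1}P(I-P)=0$ by commutation and $P(I-P)=0$, so $AB=I-P$, and then $AB^{2}=(I-P)B=B-PB=B$. Moreover, $A-A^{2}B=A-A(I-P)=AP$ is (quasi)nilpotent, which is already \eqref{3eq2} in the generalized case; in the Drazin case, $(AP)^{k}=A^{k}P^{k}=A^{k}P=0$ for some $k$, whence $A^{k+1}B=A^{k}(I-P)=A^{k}$, recovering \eqref{3eq0}.

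The only nontrivial step is establishing the invertibility of $A+P$; everything else is direct manipulation of the commutation and idempotency relations. For the Drazin case this is elementary, but for the generalized Drazin case the main obstacle is isolating the reduction $A+P$ invertible $\Leftrightarrow$ $I+AP$ invertible via the identity $(A+P)(B+P)=I+AP$, and then correctly invoking the spectral mapping theorem to conclude $\sigma_S(I+AP)=\{1\}$ from $\sigma_S(AP)=\{0\}$. Once that is handled, both directions close up with the same explicit formula \eqref{3eq4}.
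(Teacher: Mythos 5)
Your proposal is correct and follows essentially the same route as the paper: set $P=I-AB$ in one direction, use the identity $(A+P)(B+P)=(B+P)(A+P)=I+AP$ together with $\sigma_S(I+AP)=\{1\}$ to get invertibility of $A+P$, and in the other direction verify the defining relations directly for $B=(A+P)^{-1}(I-P)$. You supply more detail than the paper (e.g.\ the computation $PB=0$ and the explicit recovery of the formula $B=(A+P)^{-1}(I-P)$ in the forward direction), but the underlying argument is identical.
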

\begin{proof}
Suppose that there is a projection $P$ commuting with $A$ and satisfying (\ref{3eq3}). Set $B=(A+P)^{-1}(I-P)$, then $AB=BA$, $AB^2=B$ and $A-A^2B=AP\in N(\mathcal{B}_R(X))\;(\text{resp. }A-A^2B=AP\in QN(\mathcal{B}_R(X)))$.\\
Conversely, suppose that $B$ satisfies (\ref{3eq1}) (resp. (\ref{3eq2})) and set $P=I-AB$. Since $AB^2=B$, $P$ is a projection commuting with $A$ and $AP=A-A^2B$, then $AP\in N(\mathcal{B}_R(X))\text{ (resp. }AP\in QN(\mathcal{B}_R(X))\text{)}$. Furthermore $(A+P)(B+P)=(B+P)(A+P)=I+AP$ and $I+AP$ is invertible because $\sigma_S(I+AP)=\{1\}$, then $A+P$ is invertible in $\mathcal{B}_R(X)$.
\end{proof}
\begin{definition}
Let $A\in \mathcal{B}_R(X)$. For $s\in\rho_S(T)$, we define the left S-resolvent
operator as
$$S^{-1}_L(s,A)=-Q_s(T)^{-1}(T-\bar{s} I).$$
\end{definition}
\begin{theorem}[{\cite[Theorem 4.1.5]{31}}]
\label{3t10}
Let $A\in \mathcal{B}_R(X)$ and assume that $\sigma_S(A)=\sigma_1\cup\sigma_2$ with
$$dist(\sigma_1, \sigma_2) > 0.$$
We choose an open axially symmetric set $O$ with $\sigma_1 \subset O$ and $\overline{O}\cap\sigma_2 =\emptyset$, and
define a function $\chi_{\sigma_1}$ on $\mathbb{H}$ by $\chi_{\sigma_1}(s) = 1$ for $s \in O$ and $\chi_{\sigma_1}(s) = 0$ for $s \notin O$. Then $\chi_{\sigma_1}\in \mathcal{N}(\sigma_S(A))$, and for an arbitrary imaginary unit $j$ in $\mathbb{S}$ and an arbitrary bounded slice Cauchy domain $U\subset\mathbb{H}$ such that $\bar{U} \subset O$, we have
$$P_{\sigma_1}:=\chi_{\sigma_1}(A)=\frac{1}{2\pi}\int_{\partial(U\cap\mathbb{C}_j)}S^{-1}_L(s,A)ds_j$$
is a continuous projection that commutes with $A$. Hence $P_{\sigma_1}(X)$ is a right linear
subspace of $X$ that is invariant under $A$.
\end{theorem}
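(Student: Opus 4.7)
The plan is to verify the claim in five steps, all of which are natural applications of the machinery already developed in the excerpt (S-functional calculus, product rule, spectral mapping theorem).

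First I would establish that $\chi_{\sigma_1}\in\mathcal{N}(\sigma_S(A))$. Since $\overline{O}\cap\sigma_2=\emptyset$ and both sets are closed (and $\sigma_2$ is compact and axially symmetric by compactness of the S-spectrum), I can pick an axially symmetric open neighborhood $V$ of $\sigma_2$ with $\overline{V}\cap\overline{O}=\emptyset$. On the axially symmetric open set $W:=O\cup V$, which contains $\sigma_S(A)$, the function $\chi_{\sigma_1}$ is locally constant and real-valued. Writing $\chi_{\sigma_1}(q)=f_0(u,v)+jf_1(u,v)$ with $f_0$ equal to $1$ on the axially symmetric component containing $q$ if that component lies in $O$ and $0$ otherwise, and with $f_1\equiv 0$, both the compatibility relations (\ref{3eq6}) and the Cauchy--Riemann relations (\ref{3eq7}) are trivially satisfied, so $\chi_{\sigma_1}$ is intrinsic slice hyperholomorphic on $W$.

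Next I would derive the integral representation. By the definition of the S-functional calculus applied to the left slice hyperholomorphic $\chi_{\sigma_1}$, one has
$$\chi_{\sigma_1}(A)=\frac{1}{2\pi}\int_{\partial(W'\cap\mathbb{C}_j)}S_L^{-1}(s,A)\,ds_j\,\chi_{\sigma_1}(s),$$
where $W'$ is any bounded slice Cauchy domain with $\sigma_S(A)\subset W'$ and $\overline{W'}\subset W$. I would choose $W'=U\cup U'$ with $U$ the given slice Cauchy domain around $\sigma_1$ and $U'$ a slice Cauchy domain with $\sigma_2\subset U'$ and $\overline{U'}\subset V$. Because $\chi_{\sigma_1}\equiv 0$ on $\overline{U'}$, the integral over $\partial(U'\cap\mathbb{C}_j)$ vanishes, while on $\partial(U\cap\mathbb{C}_j)$ the factor $\chi_{\sigma_1}(s)$ is identically $1$, yielding the stated formula; the independence of $U$ and $j$ is part of the vector-valued S-functional calculus.

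Then I would obtain the projection and commutation properties. Since $\chi_{\sigma_1}\in\mathcal{N}(\sigma_S(A))$ and $\chi_{\sigma_1}^2=\chi_{\sigma_1}$ on $\sigma_S(A)$, the product rule (Theorem \ref{3t7}) gives $P_{\sigma_1}^2=\chi_{\sigma_1}(A)^2=(\chi_{\sigma_1}^2)(A)=\chi_{\sigma_1}(A)=P_{\sigma_1}$, so $P_{\sigma_1}$ is an idempotent in $\mathcal{B}_R(X)$ and hence a continuous (bounded) projection. For commutation, letting $\mathrm{id}(q)=q$, both $\chi_{\sigma_1}\cdot\mathrm{id}$ and $\mathrm{id}\cdot\chi_{\sigma_1}$ agree pointwise (intrinsic functions commute under pointwise multiplication with any slice hyperholomorphic function), and applying the product rule to $f=\chi_{\sigma_1}\in\mathcal{N}(\sigma_S(A))$ with $g=\mathrm{id}$ in both orders yields $P_{\sigma_1}A=(\chi_{\sigma_1}\cdot\mathrm{id})(A)=(\mathrm{id}\cdot\chi_{\sigma_1})(A)=AP_{\sigma_1}$. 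Finally, $P_{\sigma_1}(X)$ is a right linear subspace because $P_{\sigma_1}$ is right linear, and for any $u=P_{\sigma_1}v\in P_{\sigma_1}(X)$ we have $Au=AP_{\sigma_1}v=P_{\sigma_1}(Av)\in P_{\sigma_1}(X)$, giving invariance.

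I expect the main subtlety to be step one, namely pinning down precisely why the locally constant function $\chi_{\sigma_1}$ qualifies as an intrinsic slice hyperholomorphic function on a genuine axially symmetric open neighborhood of $\sigma_S(A)$; this requires using axial symmetry of $O$ and the positive distance between $\sigma_1$ and $\sigma_2$ to ensure that the defining pair $(f_0,f_1)$ is well defined on the corresponding parameter set $\mathcal{U}$ and satisfies (\ref{3eq6}). Once that is in place, the remaining steps are essentially automatic consequences of the product rule and the S-functional calculus.
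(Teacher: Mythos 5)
The paper does not prove this statement at all --- it is quoted verbatim from \cite[Theorem 4.1.5]{31} --- so there is no internal proof to compare against. Your argument is correct and is essentially the standard proof of the cited result: a locally constant real-valued function on an axially symmetric open set is intrinsic slice hyperholomorphic, the contour component surrounding $\sigma_2$ contributes nothing because $\chi_{\sigma_1}$ vanishes on it, and idempotency and commutation with $A$ follow from the product rule (Theorem \ref{3t7}) applied to $\chi_{\sigma_1}^2=\chi_{\sigma_1}$ and to $\chi_{\sigma_1}\cdot\mathrm{id}=\mathrm{id}\cdot\chi_{\sigma_1}$. The one imprecision is your parenthetical claim that $\sigma_2$ is axially symmetric ``by compactness of the S-spectrum'': what is actually used is that $\sigma_S(A)$ is axially symmetric and each sphere $[q]$ is connected, so the positive distance between $\sigma_1$ and $\sigma_2$ forces $[q]$ to lie entirely inside whichever $\sigma_i$ meets it; with that repaired, the choice of the axially symmetric neighborhood $V$ of $\sigma_2$ and hence the whole first step goes through.
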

\begin{remark}
\emph{Let $q\in\mathbb{H}$. If $\sigma_1=\{q\}$, we say that the projection $P_{\sigma_1}$ is the Riesz's projection of $A$ corresponding to $q$.}
\end{remark}
Denote by $acc\;U$ (resp. $iso\;U$) the set of all accumulation (resp. isolated) points of a set $U\subseteq\mathbb{H}$.
\begin{theorem}
\label{3t1}
Let $A\in\mathcal{B}_R(X)$. Then $0\notin acc\;\sigma_S(A)$ if and only if there is a projection $P\in\mathcal{B}_R(X)$
commuting with $A$ such that
\begin{equation}
AP\in QN(\mathcal{B}_R(X))\text{ and } A+P \text{ is invertible in } \mathcal{B}_R(X). \label{3eq5}
\end{equation}
Moreover, $0 \in iso\;\sigma_S(A)$ if and only if $P\neq 0$, in which a case $P$ is the Riesz's projection of $A$
corresponding to $q = 0$.
\end{theorem}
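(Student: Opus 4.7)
The plan is to prove the equivalence via the S-functional calculus, then handle the ``moreover'' clause by a spectral uniqueness argument.

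For $(\Rightarrow)$, if $0\notin\sigma_S(A)$ I take $P=0$, so that $AP=0\in QN(\mathcal{B}_R(X))$ and $A+P=A$ is invertible. If instead $0\in iso\,\sigma_S(A)$, then $\sigma_S(A)=\{0\}\cup\sigma_2$ with $\sigma_2:=\sigma_S(A)\setminus\{0\}$ closed and at positive distance from $0$, so Theorem~\ref{3t10} produces a projection $P:=\chi_{\{0\}}(A)$ commuting with $A$. The functions $h(q)=q\chi_{\{0\}}(q)$ and $g(q)=q+\chi_{\{0\}}(q)$ are intrinsic slice hyperholomorphic on a neighborhood of $\sigma_S(A)$; by the product rule (Theorem~\ref{3t7}) and linearity of the calculus, $h(A)=AP$ and $g(A)=A+P$, and by the spectral mapping theorem (Theorem~\ref{3t8}) $\sigma_S(AP)=h(\sigma_S(A))=\{0\}$ while $\sigma_S(A+P)=\{1\}\cup\sigma_2\not\ni 0$.

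For $(\Leftarrow)$, since $AP=PA$, the splitting $X=P(X)\oplus(I-P)(X)$ is $A$-invariant and I write $A=A_1\oplus A_2$. Then $AP=A_1\oplus 0$, so $\sigma_S(A_1)\subseteq\sigma_S(AP)=\{0\}$, while $A+P=(A_1+I)\oplus A_2$ being invertible forces $A_2$ invertible, i.e., $0\notin\sigma_S(A_2)$. Since $\sigma_S(A)=\sigma_S(A_1)\cup\sigma_S(A_2)$ with $\sigma_S(A_2)$ closed and missing $0$, the point $0$ is either absent from $\sigma_S(A)$ or isolated in it.

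For the ``moreover'' clause, $P=0$ iff $P(X)=\{0\}$ iff $\sigma_S(A_1)=\emptyset$ iff $0\notin\sigma_S(A)$ (using nonemptiness of the S-spectrum of a bounded operator on a nontrivial space). To see that in the remaining case $P$ equals the Riesz projection $Q:=\chi_{\{0\}}(A)$, I would verify that $P$ commutes with $Q_s(A)^{-1}$ and with $A-\bar s I$ for every $s\in\rho_S(A)$, hence with $S_L^{-1}(s,A)$, so that (using that $\chi_{\{0\}}$ is intrinsic, which lets the commutation pass through the Cauchy integral defining $Q$) $P$ commutes with $Q$. Then $X$ splits into four closed $A$-invariant subspaces $PQ(X)$, $P(I-Q)(X)$, $(I-P)Q(X)$, $(I-P)(I-Q)(X)$; on $P(I-Q)(X)$ the restricted operator has S-spectrum simultaneously inside $\sigma_S(A_1)=\{0\}$ and inside $\sigma_S(A|_{(I-Q)(X)})$ (which excludes $0$ by the forward computation applied to $Q$), forcing this piece to be trivial, and symmetrically $(I-P)Q(X)=\{0\}$, whence $P=Q$.

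The main obstacle is this uniqueness step: the forward construction is a direct functional-calculus computation, but showing that an arbitrary $P$ satisfying (\ref{3eq5}) must coincide with the Riesz projection requires tracking the behavior of the S-spectrum under restriction to invariant complemented subspaces, combined with nonemptiness of $\sigma_S$ and the subtle point that the commutation $PA=AP$ propagates through the quaternionic Cauchy integral defining $Q$.
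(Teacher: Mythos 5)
Your forward direction coincides with the paper's (Riesz projection plus the product rule and spectral mapping theorem), and your converse is a correct alternative route: instead of decomposing $X=P(X)\oplus(I-P)(X)$ and reading off $AP=A_1\oplus 0$, $A+P=(A_1+I)\oplus A_2$, the paper verifies by direct computation that $Q_q(A)^{-1}=PQ_q(AP)^{-1}+(I-P)Q_q(A+P)^{-1}$ for $0<\vert q\vert<r$, $q\neq 0$. Both arguments give $0\notin acc\,\sigma_S(A)$, and your identification of the case $P=0$ via nonemptiness of the S-spectrum is fine.

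The gap is in your uniqueness step. You propose to obtain $PQ=QP$ for $Q=\chi_{\{0\}}(A)$ by arguing that $P$ commutes with $A-\bar sI$ and hence with $S_L^{-1}(s,A)=-Q_s(A)^{-1}(A-\bar sI)$. But $\bar sI$ is \emph{left} multiplication by the generally non-real quaternion $\bar s$, and a right linear operator commuting with $A$ has no reason to commute with it: right linearity gives $P(uq)=P(u)q$, not $P(qu)=qP(u)$. Concretely, on $X=\mathbb{H}^2$ the projection $P=\left[\begin{smallmatrix}1&\mathrm{i}\\0&0\end{smallmatrix}\right]$ commutes with $A=I$ but not with $\mathrm{j}I$. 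While $P$ does commute with $Q_s(A)^{-1}$ (only real scalars occur there), this is not enough to push $P$ through the Cauchy integral defining $Q$; the true statement that operators commuting with $A$ commute with $f(A)$ for intrinsic $f$ rests on the symmetry of the contour and the realness of $f_0,f_1$, not on the pointwise commutation you invoke, so your four-subspace argument is left without its key input. The paper sidesteps this entirely: from the $Q_q$-identity it derives $S_L^{-1}(q,A)=PS_L^{-1}(q,AP)+(I-P)S_L^{-1}(q,A+P)$, which uses only $PA=AP$, and integrates over a small contour around $0$; the $(I-P)$-term vanishes by Cauchy's theorem since $\sigma_S(A+P)$ misses a neighborhood of $0$, and the $P$-term integrates to $P$ since $\sigma_S(AP)=\{0\}$, giving $\chi_{\{0\}}(A)=P$ directly. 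Within your own framework the cleanest repair is to compute $\chi_{\{0\}}(A)=\chi_{\{0\}}(A_1)\oplus\chi_{\{0\}}(A_2)=I\oplus 0=P$ on the decomposition you already constructed, with no commutation claim needed.
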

\begin{proof}
Clearly, $0\notin\sigma_S(A)$ if and only if (\ref{3eq5}) holds with $P=0$.\\
Assume that $0 \in iso\;\sigma_S(A)$. Let $P$ be the spectral projection of $A$ corresponding to $q = 0$, then $P\neq 0$, commutes with $A$ and $AP=id(A)\chi_{\{0\}}(A)=(id\chi_{\{0\}})(A)$ where $id : \mathbb{H}\rightarrow\mathbb{H},q\mapsto q$. Hence $\sigma_S(AP)=id\chi_{\{0\}}(\sigma_S(A))=\{0\}$, thus $AP\in QN(\mathcal{B}_R(X))$. Similarly $A+P=id(A)+\chi_{\{0\}}(A)=(id+\chi_{\{0\}})(A)$, then $0\notin \sigma_S(A+P)=(id+\chi_{\{0\}})\sigma_S(A)$, hence $A+P$ is invertible.\\
Conversely, assume that there is a nonzero projection $P$ commuting with $A$ such that (\ref{3eq5}) holds. For any $q\in\mathbb{H}$, we have 
\begin{align*}
&A^2-2\text{Re}(q)A+\vert q\vert^2 I\\
&=P((AP)^2-2\text{Re}(q)AP+\vert q\vert^2 I)+(I-P)((A+P)^2-2\text{Re}(q)(A+P)+\vert q\vert^2 I).
\end{align*}
There is $r>0$ such that if $\vert q\vert<r$ then $(A+P)^2-2\text{Re}(q)(A+P)+\vert q\vert^2 I$ is invertible. Since $AP\in QN(\mathcal{B}_R(X))$, $(AP)^2-2\text{Re}(q)AP+\vert q\vert^2 I$ is invertible of all $q\neq0$. Hence for all $0<\vert q\vert<r$, it is easy to check that 
\begin{align*}
(A^2-2\text{Re}(q)A+\vert q\vert^2 I)^{-1}=&P((AP)^2-2\text{Re}(q)AP+\vert q\vert^2 I)^{-1}\\
&+(I-P)((A+P)^2-2\text{Re}(q)(A+P)+\vert q\vert^2 I)^{-1}.
\end{align*}
That is,
$$Q_q(A)^{-1}=PQ_q(AP)^{-1}+(I-P)Q_q(A+P)^{-1}.$$
Since $S^{-1}_L(q,A)=-Q_q(A)^{-1}(A-\bar{q}I)$, it is easy to see that
\begin{equation}
S^{-1}_L(q,A)=PS^{-1}_L(q,AP)+(I-P)S^{-1}_L(q,A+P).\label{3eq8}
\end{equation}
Since $P\neq 0$, $0 \in iso\;\sigma_S(A)$. Indeed, if $A$ is invertible, then $A^{-1}AP=P$, so that $r_S(A^{-1}AP)\leq r_S(A^{-1})r_S(AP)=0$ and $r_S(P)=1$. To show that $P$ is the Riesz's projection of $A$ corresponding to $q=0$. Let $j$ and $U$ as in Theorem \ref{3t10}, then
$$\chi_{\{0\}}(A)=\frac{1}{2\pi}\int_{\partial(U\cap\mathbb{C}_j)}S^{-1}_L(s,A)ds_j.$$
If we take $U$ such that $U\subset\{q\in\mathbb{H}:\vert q\vert<\frac{r}{2}\}$, then by (\ref{3eq8})
\begin{align*}
\chi_{\{0\}}(A)&=\frac{1}{2\pi}\int_{\partial(U\cap\mathbb{C}_j)}S^{-1}_L(s,A)ds_j\\
&=\frac{1}{2\pi}\int_{\partial(U\cap\mathbb{C}_j)}PS^{-1}_L(s,AP)+(I-P)S^{-1}_L(s,A+P)ds_j\\
&=\frac{1}{2\pi}\int_{\partial(U\cap\mathbb{C}_j)}PS^{-1}_L(s,AP)ds_j+\frac{1}{2\pi}\int_{\partial(U\cap\mathbb{C}_j)}(I-P)S^{-1}_L(s,A+P)ds_j\\
&=P\frac{1}{2\pi}\int_{\partial(U\cap\mathbb{C}_j)}S^{-1}_L(s,AP)ds_j+(I-P)\frac{1}{2\pi}\int_{\partial(U\cap\mathbb{C}_j)}S^{-1}_L(s,A+P)ds_j.
\end{align*}
Since $S^{-1}_L(\cdot,A+P)$ is right slice hyperholomorphic function on $U$ (see, \cite[Lemma 3.1.11]{31}), 
$$\int_{\partial(U\cap\mathbb{C}_j)}S^{-1}_L(s,A+P)ds_j=0.$$
On the other hand, 
$$\frac{1}{2\pi}\int_{\partial(U\cap\mathbb{C}_j)}S^{-1}_L(s,AP)ds_j=I,$$
because $\sigma_S(AP)=\{0\}\subset U$. Hence $\chi_{\{0\}}(A)=P$. This completes the proof.
\end{proof}
\begin{corollary}
The Drazin $($resp. generalized Drazin$)$ inverse of an operator $A\in\mathcal{B}_R(X)$ is uniquely determined.
\end{corollary}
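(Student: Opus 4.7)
The plan is to combine Lemma \ref{3l1} with the ``moreover'' part of Theorem \ref{3t1}, so that uniqueness of the (generalized) Drazin inverse reduces to uniqueness of an associated projection, which has essentially already been established.

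First, I would let $B_1, B_2 \in \mathcal{B}_R(X)$ be two (generalized) Drazin inverses of $A$. By the converse direction of Lemma \ref{3l1}, each $P_i := I - AB_i$ is a projection commuting with $A$ such that $AP_i \in N(\mathcal{B}_R(X))$ (resp. $AP_i \in QN(\mathcal{B}_R(X))$) and $A + P_i$ is invertible, and moreover the formula \eqref{3eq4} gives $B_i = (A + P_i)^{-1}(I - P_i)$. Thus the whole question reduces to showing $P_1 = P_2$.

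Second, I would invoke Theorem \ref{3t1} to pin down $P_i$ intrinsically in terms of $A$ alone. If $P_i = 0$, then $A = A + P_i$ is invertible, and this case occurs precisely when $0 \notin \sigma_S(A)$. If $P_i \neq 0$, then by the ``moreover'' assertion of Theorem \ref{3t1}, $0 \in \mathrm{iso}\,\sigma_S(A)$ and $P_i$ must equal the Riesz projection $\chi_{\{0\}}(A)$ of $A$ corresponding to $q=0$. In either case, $P_i$ depends only on $A$, not on the choice of $B_i$. Hence $P_1 = P_2$, and substituting into the formula $B_i = (A+P_i)^{-1}(I-P_i)$ yields $B_1 = B_2$.

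The entire difficulty sits in Theorem \ref{3t1}, whose ``moreover'' direction does the real work by identifying any admissible projection with $\chi_{\{0\}}(A)$ through the S-resolvent representation. Given that identification, the corollary is a one-line consequence: Lemma \ref{3l1} gives the projection $\leftrightarrow$ Drazin inverse correspondence, and Theorem \ref{3t1} gives uniqueness of the projection, so both Drazin and generalized Drazin inverses are uniquely determined at once.
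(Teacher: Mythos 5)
Your proposal is correct and follows essentially the same route as the paper: both arguments reduce uniqueness of the (generalized) Drazin inverse to uniqueness of the associated commuting projection $P_i=I-AB_i$, which Theorem \ref{3t1} pins down as either $0$ (when $0\notin\sigma_S(A)$) or the Riesz projection $\chi_{\{0\}}(A)$ (when $0\in iso\,\sigma_S(A)$). The one step to make explicit is that Lemma \ref{3l1} does not literally assert $B_i=(A+P_i)^{-1}(I-P_i)$ for an \emph{arbitrary} Drazin inverse $B_i$ (it only constructs one such inverse from $P_i$, so quoting the formula as given risks circularity); this is closed by the one-line computation $P_iB_i=B_i-AB_i^2=0$, hence $(A+P_i)B_i=AB_i=I-P_i$ and $B_i=(A+P_i)^{-1}(I-P_i)$ --- the paper instead avoids the formula altogether and concludes from $AB_1=AB_2$ algebraically via $B_1=B_1^2A=B_1AB_2=AB_2^2=B_2$.
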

\begin{proof}
If $A$ is invertible, then it has a unique Drazin inverse which coincides with its inverse $A^{-1}$. Assume that $A$ is not invertible and let $B$ and $C$ be two Drazin (resp. generalized Drazin) inverses of $A$, then by the proof of Lemma \ref{3l1}, $I-BA$ and $I-AC$ are two projections commuting with $A$ and satisfying (\ref{3eq5}). Then by Theorem \ref{3t1}, $I-BA=I-AC$, and so $BA=AC$, thus $B=B^2A=BAC=AC^2=C$. Hence the Drazin (resp. generalized Drazin) inverse of $A$ is unique.
\end{proof}
\begin{theorem}
\label{3t2}
Let $A\in\mathcal{B}_R(X)$. If $0\in iso\;\sigma_S(A)$, then 
$$A^D=f(A),$$
where $f \in\mathcal{N}(\sigma_S(A))$ is such that $f$ is $0$ in an axially symmetric neighborhood of $0$ and $f(q) = q^{-1}$ in an axially symmetric neighborhood of $\sigma_S(A)\setminus\{0\}$, and
$$\sigma_S(A^D)\setminus\{0\}=\{q^{-1}:q\in\sigma_S(A)\setminus\{0\}\}.$$
\end{theorem}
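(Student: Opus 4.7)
The plan is to build the candidate for $A^D$ directly through the intrinsic slice hyperholomorphic functional calculus, verify each of the three Drazin axioms by an identity of functions pushed through the product rule, and then compute the $S$-spectrum of the result via the spectral mapping theorem.

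Because $0 \in iso\;\sigma_S(A)$, the set $\sigma_S(A)$ splits as $\{0\}\cup\sigma_1$ with $\sigma_1:=\sigma_S(A)\setminus\{0\}$ and $\mathrm{dist}(\{0\},\sigma_1)>0$. First I would choose two disjoint axially symmetric open sets $V_0\supset\{0\}$ and $V_1\supset\sigma_1$ (possible by axial symmetry of $\sigma_S(A)$), and define $f$ on $U:=V_0\cup V_1$ by $f\equiv 0$ on $V_0$ and $f(q)=q^{-1}$ on $V_1$. Since $q\mapsto q^{-1}=\bar q/|q|^2$ is intrinsic slice hyperholomorphic on $V_1\subset\mathbb{H}\setminus\{0\}$ and the zero function is trivially intrinsic, $f\in\mathcal{N}(\sigma_S(A))$. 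Set $B:=f(A)$.

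Next I would verify the three defining relations of the generalized Drazin inverse using Theorem \ref{3t7}. Writing $\mathrm{id}(q)=q$, which lies in $\mathcal{N}(\sigma_S(A))$, the product rule gives $AB=\mathrm{id}(A)f(A)=(\mathrm{id}\cdot f)(A)=(f\cdot \mathrm{id})(A)=f(A)\mathrm{id}(A)=BA$. For the second axiom, observe the pointwise identity $q\,f(q)^2=f(q)$ (it is $0$ on $V_0$ and equals $q\cdot q^{-2}=q^{-1}=f(q)$ on $V_1$); applying the product rule twice yields $AB^2=(\mathrm{id}\cdot f\cdot f)(A)=f(A)=B$. For the quasinilpotency condition, consider $g(q):=q-q^2 f(q)$: on $V_0$ one has $g(q)=q$, while on $V_1$ one has $g(q)=q-q^2 q^{-1}=0$, so $g\in\mathcal{N}(\sigma_S(A))$ and the product rule gives $g(A)=A-A^2B$. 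By the spectral mapping theorem (Theorem \ref{3t8}), $\sigma_S(g(A))=g(\sigma_S(A))=g(\{0\})\cup g(\sigma_1)=\{0\}$, so $A-A^2B\in QN(\mathcal{B}_R(X))$. This is exactly condition (\ref{3eq2}) and, by the uniqueness corollary just proved, identifies $B=A^D$.

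Finally, for the spectral description, I would apply Theorem \ref{3t8} once more:
\[
\sigma_S(A^D)=\sigma_S(f(A))=f(\sigma_S(A))=\{f(0)\}\cup\{q^{-1}:q\in\sigma_1\}=\{0\}\cup\{q^{-1}:q\in\sigma_1\},
\]
and since $q^{-1}\ne 0$ for every $q\in\sigma_1$, removing $0$ from both sides yields the stated equality. I do not foresee a genuine obstacle here: all the work is performed by the product rule and the spectral mapping theorem, and the only point requiring care is the construction of $f$ on two disjoint axially symmetric neighborhoods so that $f\in\mathcal{N}$, which is harmless because $0$ is isolated in $\sigma_S(A)$.
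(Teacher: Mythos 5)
Your proposal is correct and follows essentially the same route as the paper: construct the intrinsic function $f$ on two disjoint axially symmetric neighborhoods, verify the generalized Drazin relations (\ref{3eq2}) for $f(A)$ via the product rule and the spectral mapping theorem, and then read off the spectrum from Theorem \ref{3t8}. The only difference is that you spell out the pointwise function identities behind the step the paper dismisses as ``easy to see,'' which is a faithful and complete filling-in of that gap.
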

\begin{proof}
Let $O_1$ be an axially symmetric open neighborhood of $0$ and $O_2$ be an axially symmetric open neighborhood of $\sigma_S(A)\setminus\{0\}$ with $\overline{O_1}\cap\overline{O_2}=\emptyset$. Define $f$ by $f(q)=0$ if $q\in O_1$ and $f(q)=q^{-1}$ if $q\in O_2$, clearly $f\in \mathcal{N}(\sigma_S(A))$. By Theorem \ref{3t7} and Theorem \ref{3t8}, it is easy to see that (\ref{3eq2}) holds for $A$ and $f(A)$.\\
By Theorem \ref{3t8}, it follows that $\sigma_S(A^D)\setminus\{0\}=\sigma_S(f(A))\setminus\{0\}=\{f(q):q\in\sigma_S(A)\setminus\{0\}\}=\{q^{-1}:q\in\sigma_S(A)\setminus\{0\}\}.$
\end{proof}
\begin{theorem}
\label{3t5}
Let $A\in\mathcal{B}_R(X)$. The following conditions are equivalent:
\begin{itemize}
\item[(i)] $A$ is generalized Drazin invertible;
\item[(ii)] $0\notin acc\;\sigma_S(A)$;
\item[(iii)] $A = A_1 \oplus A_2$, where $A_1$ is invertible on some closed subspace $X_1$ of $X$ and $A_2$ is quasinilpotent on some complemented subspace $X_1$ of $X$.
\end{itemize} 
\end{theorem}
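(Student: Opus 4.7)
The plan is to prove the chain of equivalences via (i)$\Leftrightarrow$(ii)$\Leftrightarrow$(iii), relying mainly on Lemma~\ref{3l1}, Theorem~\ref{3t1} and the spectral mapping theorem. The equivalence (i)$\Leftrightarrow$(ii) will be almost immediate from the machinery already built; the real content lies in translating the projection guaranteed by Theorem~\ref{3t1} into the direct-sum decomposition of (iii) and back.

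For (i)$\Leftrightarrow$(ii), I would simply chain two earlier characterizations. By Lemma~\ref{3l1}, $A$ is generalized Drazin invertible if and only if there exists a projection $P\in\mathcal{B}_R(X)$ commuting with $A$ with $AP\in QN(\mathcal{B}_R(X))$ and $A+P$ invertible, which by Theorem~\ref{3t1} is in turn equivalent to $0\notin acc\,\sigma_S(A)$.

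For (ii)$\Rightarrow$(iii), I would separate the trivial case $0\notin\sigma_S(A)$ (take $X_1=X$, $X_2=\{0\}$, with $A_1=A$ invertible and $A_2=0$) from the case $0\in iso\,\sigma_S(A)$. In the latter case, Theorem~\ref{3t1} supplies a nonzero projection $P$ commuting with $A$ and satisfying (\ref{3eq5}). Setting $X_1:=\mathcal{R}(I-P)$ and $X_2:=\mathcal{R}(P)$ gives closed, complemented, $A$-invariant subspaces (because $P$ is bounded and commutes with $A$), and $A$ decomposes as $A_1\oplus A_2$ with $A_i:=A\vert_{X_i}$. On $X_2$ the operator $A_2$ coincides with the restriction of $AP$ and is therefore quasinilpotent; on $X_1$ one has $P=0$, so $A_1$ is the restriction of $A+P$, which is invertible on $X$ and respects the decomposition, hence is invertible on $X_1$.

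For (iii)$\Rightarrow$(ii), from the block form $A=A_1\oplus A_2$ I would derive the componentwise identity $Q_q(A)=Q_q(A_1)\oplus Q_q(A_2)$ for every $q\in\mathbb{H}$, which yields $\sigma_S(A)=\sigma_S(A_1)\cup\sigma_S(A_2)=\sigma_S(A_1)\cup\{0\}$ since $\sigma_S(A_2)=\{0\}$ by quasinilpotency. Invertibility of $A_1$ implies $0\notin\sigma_S(A_1)$, and $\sigma_S(A_1)$ is compact by Theorem~\ref{3t6}, so it is bounded away from $0$; hence $0\notin acc\,\sigma_S(A)$. The only step requiring any care, and the one I expect to be the main technical obstacle, is the passage from invertibility of $A+P$ on $X$ to invertibility of its restriction $A_1$ on $X_1$ in (ii)$\Rightarrow$(iii), together with the parallel statement that the S-spectrum of a block-diagonal operator is the union of the S-spectra of the blocks; both reduce to the observation that the commutation $[A,P]=0$ forces $Q_q(A)$ and $A+P$ to respect the decomposition $X=X_1\oplus X_2$.
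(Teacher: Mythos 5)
Your proposal is correct and follows essentially the same route as the paper: both use Lemma~\ref{3l1} together with Theorem~\ref{3t1} for (i)$\Leftrightarrow$(ii), and both obtain the decomposition in (iii) by splitting $X$ along the range and kernel of the commuting projection $P$ (the Riesz projection at $0$, equivalently $I-AA^D$), with $A$ quasinilpotent on $\mathcal{R}(P)$ and invertible on $\mathcal{N}(P)$. The only cosmetic difference is how you close the loop --- you prove (iii)$\Rightarrow$(ii) by computing $\sigma_S(A_1\oplus A_2)=\sigma_S(A_1)\cup\{0\}$ and using closedness of $\sigma_S(A_1)$, whereas the paper proves (iii)$\Rightarrow$(i) by exhibiting $A^D=A_1^{-1}\oplus 0$ directly (and your citation for compactness of the S-spectrum should point to the compactness theorem rather than Theorem~\ref{3t6}, which is the spectral radius formula); both verifications are sound.
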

\begin{proof}
$(i)\Leftrightarrow(ii)$ Already proved in Lemma \ref{3l1} and Theorem \ref{3t1}.\\
$(i)\Rightarrow(iii)$ By Lemma \ref{3l1} there exists a projection $P=I-AA^D$ such that $AP$ is quasinilpotent and $AP=PA$, then $\mathcal{R}(P)$ and $\mathcal{N}(P)$ are invariant under $A$, that is $A\mathcal{R}(P)\subset\mathcal{R}(P)$ and $A\mathcal{N}(P)\subset\mathcal{N}(P)$. Let $u\in\mathcal{N}(P)$, then $u=AA^Du$, thus the restriction of $A$ to the kernel of $P$ is injective and surjective, and so invertible. If we write $A = A_1 \oplus A_2$ on $X = \mathcal{N}(P) \oplus \mathcal{R}(P)$, then $A_2\in\mathcal{B}_R(X_1)$ is quasinilpotent and $A_1\in\mathcal{B}_R(X_2)$ is invertible.\\
$(iii)\Rightarrow(i)$ It is easy to check that $A^D=A_1^{-1}\oplus 0$.
\end{proof}
\begin{corollary}
\label{3c1}
Let $A\in\mathcal{B}_R(X)$. The following conditions are equivalent:
\begin{itemize}
\item[(i)] $A$ is Drazin invertible;
\item[(iii)] $A = A_1 \oplus A_2$, where $A_1$ is invertible on some closed subspace $X_1$ of $X$, $A_2$ is nilpotent on some complemented subspace $X_1$ of $X$ and the nilpotency index of $A_2$ is the Drazin index of $A$.
\end{itemize}
\end{corollary}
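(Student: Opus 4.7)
The plan is to run the same argument as in Theorem \ref{3t5}, but with the quasinilpotent class $QN(\mathcal{B}_R(X))$ replaced throughout by the nilpotent class $N(\mathcal{B}_R(X))$, and with extra care to match the Drazin index $i(A)$ with the nilpotency index of the nilpotent summand. The key bookkeeping identity I will rely on is the one already established in the lemma following Definition \ref{3d1}, namely $(A-A^2B)^k = A^k(I-AB)$ whenever $AB=BA$ and $AB^2=B$; this is exactly what lets the Drazin index be read off as the nilpotency index of $A - A^2B = AP$.

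For $(i) \Rightarrow (iii)$, I start from a Drazin inverse $B = A^d$. Lemma \ref{3l1} produces a projection $P = I - AB$ commuting with $A$, with $AP \in N(\mathcal{B}_R(X))$ and $A+P$ invertible, and the lemma just mentioned gives that the nilpotency index of $AP$ equals $i(A)$. Since $P$ commutes with $A$, the decomposition $X = \mathcal{N}(P) \oplus \mathcal{R}(P)$ is $A$-invariant, so I can write $A = A_1 \oplus A_2$ on $\mathcal{N}(P) \oplus \mathcal{R}(P)$. The restriction $A_1$ is invertible because on $\mathcal{N}(P)$ the operator $B$ inverts $A$ (from $AB^2=B$ and $Pu = 0$, one gets $u = ABu$, and similarly the range relations from Remark \ref{3r1} work in this subspace). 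The restriction $A_2$ coincides with $AP$ on $\mathcal{R}(P)$ and is $0$ on $\mathcal{N}(P)$, so its nilpotency index equals that of $AP$, which is $i(A)$.

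For $(iii) \Rightarrow (i)$, suppose $A = A_1 \oplus A_2$ with $A_1$ invertible and $A_2$ nilpotent of index $k$. I propose $B := A_1^{-1} \oplus 0$. A direct check shows $AB = BA = I_{X_1} \oplus 0$, $AB^2 = A_1^{-1} \oplus 0 = B$, and $A^{k+1}B = A_1^k \oplus 0 = A^k$ (using $A_2^k = 0$). So $B$ is a Drazin inverse with index at most $k$. Minimality of $k$ follows because any $j$ with $A^{j+1}B = A^j$ forces $A_2^j = 0$ on $X_2$, hence $j \geq k$. Therefore $i(A) = k$ equals the nilpotency index of $A_2$, as claimed.

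There is no real obstacle here; the only point that needs a little attention is making sure that the nilpotency index of $A_2$ and the Drazin index $i(A)$ really coincide in both directions, not just that one bounds the other. This is handled on both sides by the commutativity of $A$ and $P$ (which makes $(AP)^n = A^n P$) together with the fact that $A_2$ is exactly the restriction of $AP$ to $\mathcal{R}(P)$ and vanishes on $\mathcal{N}(P)$, so $A_2^n = 0$ if and only if $(AP)^n = 0$, which in turn is equivalent to $A^{n+1}B = A^n$.
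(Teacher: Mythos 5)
Your proposal is correct and follows essentially the same route as the paper: both directions rest on the decomposition $X=\mathcal{N}(P)\oplus\mathcal{R}(P)$ for the commuting projection $P=I-AA^d$ (i.e.\ Theorem \ref{3t5}/Lemma \ref{3l1}) and on verifying $(\ref{3eq0})$ directly for $B=A_1^{-1}\oplus 0$. The only difference is cosmetic: you equate the indices in one stroke via $(A-A^2B)^k=A^k(I-AB)$ and the identification of $A_2$ with $AP|_{\mathcal{R}(P)}$, whereas the paper obtains the two opposite inequalities separately from the two implications.
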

\begin{proof}
Assume that $A$ is Drazin invertible, then by Theorem \ref{3t5} (iii), $A = A_1 \oplus A_2$ and $A^d=A_1^{-1}\oplus 0$. Hence, by (\ref{3eq0}), $A^{k+1}A^d=A^{k}$, then $A_1^k \oplus 0=A_1^k \oplus A_2^k$, thus $A_2^k=0$, so that the nilpotency index of $A_2$ is less than the Drazin index of $A$.\\
Conversely, let  $B = A_1^{-1} \oplus 0$, where $A_1$ is invertible and $A_2$ is nilpotent, then (\ref{3eq0}) holds for $A$, $B$ and the nilpotency index of $A_2$. Hence $A$ is Drazin invertible and the Drazin index of $A$ is less than the nilpotency index of $A_2$.
\end{proof}
\begin{theorem}
Suppose that $A\in\mathcal{B}_R(X)$ has the generalized Drazin inverse $A^D$. Then
\begin{itemize}
\item[(i)] $(A^k)^D = (A^D)^k$ for all $k\in\mathbb{N}$;
\item[(ii)] $(A^D)^D = A^2A^D$;
\item[(iii)] $((A^D)^D)^D = A^D$;
\item[(iv)]  $A^D(A^D)^D=AA^D$.
\end{itemize}
\end{theorem}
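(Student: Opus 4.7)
The plan is to reduce all four identities to block-diagonal computations via the structural decomposition given in Theorem \ref{3t5}. Since $A$ is generalized Drazin invertible, we may write $X = X_1 \oplus X_2$ with $A = A_1 \oplus A_2$, where $A_1 \in \mathcal{B}_R(X_1)$ is invertible and $A_2 \in \mathcal{B}_R(X_2)$ is quasinilpotent. The proof of $(iii)\Rightarrow(i)$ in that theorem already shows $A^D = A_1^{-1} \oplus 0$. Once this block form is in hand, each of the four statements reduces to an algebraic identity for the two blocks, where the nilpotent-part block collapses because $0 \cdot 0 = 0$.

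For (i), I would compute $A^k = A_1^k \oplus A_2^k$. The key subpoint is that $A_2^k$ remains quasinilpotent: by the spectral mapping theorem (Theorem \ref{3t8}) applied to the intrinsic slice hyperholomorphic function $q \mapsto q^k$, we have $\sigma_S(A_2^k) = \{q^k : q \in \sigma_S(A_2)\} = \{0\}$, and $A_1^k$ is invertible since $A_1$ is. Hence $A^k$ admits the same kind of decomposition, so by Theorem \ref{3t5} its generalized Drazin inverse is $(A_1^k)^{-1} \oplus 0 = (A_1^{-1})^k \oplus 0^k = (A_1^{-1} \oplus 0)^k = (A^D)^k$.

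For (ii), note that $A^D = A_1^{-1} \oplus 0$ is itself of the block form required by Theorem \ref{3t5}, with invertible part $A_1^{-1}$ and (trivially) quasinilpotent part $0$. Thus $(A^D)^D = (A_1^{-1})^{-1} \oplus 0 = A_1 \oplus 0$. A direct block computation gives $A^2 A^D = (A_1^2 \oplus A_2^2)(A_1^{-1} \oplus 0) = A_1 \oplus 0$, proving (ii). For (iii), apply the same reasoning once more: $(A^D)^D = A_1 \oplus 0$ has invertible block $A_1$, so $((A^D)^D)^D = A_1^{-1} \oplus 0 = A^D$. For (iv), $A^D (A^D)^D = (A_1^{-1} \oplus 0)(A_1 \oplus 0) = I_{X_1} \oplus 0$, while $A A^D = (A_1 \oplus A_2)(A_1^{-1} \oplus 0) = I_{X_1} \oplus 0$, so both sides agree.

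The only nontrivial step is in (i), where one must verify that $A_2^k$ is still quasinilpotent so that Theorem \ref{3t5} can be reapplied to $A^k$; the spectral mapping theorem handles this cleanly. The remaining identities are mechanical once the block decomposition is set up, since the quasinilpotent summand vanishes in every product involving $A^D$. A purely alternative route would be to invoke Theorem \ref{3t2} and express $A^D = f(A)$ for the intrinsic function equal to $q^{-1}$ away from $0$ and to $0$ near $0$, then deduce (i)--(iv) via the product and composition rules (Theorems \ref{3t7} and \ref{3t9}), but the direct sum approach is shorter.
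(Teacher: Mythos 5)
Your proof is correct, but it takes a genuinely different route from the paper. The paper works entirely inside the S-functional calculus: it writes $A^{D}=f(A)$ via Theorem \ref{3t2}, where $f$ is the intrinsic function vanishing near $0$ and equal to $q^{-1}$ near $\sigma_S(A)\setminus\{0\}$, and then derives (i)--(iv) from the function identities $f\circ g_k=g_k\circ f$, $f\circ f=g_2\circ f$, $f\circ f\circ f=f$, $f\cdot(f\circ f)=g_1\cdot f$ together with the product rule (Theorem \ref{3t7}) and the composition rule (Theorem \ref{3t9}) --- exactly the ``alternative route'' you mention in your last sentence. You instead use the space decomposition $A=A_1\oplus A_2$ from Theorem \ref{3t5} and reduce each identity to a two-block computation. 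Your argument is sound: the key points are (a) the spectral mapping theorem applied to the intrinsic polynomial $q\mapsto q^k$ to see that $A_2^k$ stays quasinilpotent, and (b) the uniqueness of the generalized Drazin inverse (the corollary following Theorem \ref{3t1}), which you use implicitly each time you identify an operator of the form $B_1^{-1}\oplus 0$ with the generalized Drazin inverse of $B_1\oplus B_2$; it would be worth citing that corollary explicitly. What each approach buys: yours is more elementary and makes the algebra transparent, at the cost of invoking the structure theorem and uniqueness; the paper's stays at the level of slice hyperholomorphic functions, never touches invariant subspaces, and showcases the calculus machinery (product and composition rules) that the rest of the paper develops. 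Both are complete proofs given what is already established in the paper.
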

\begin{proof}
Let $f \in\mathcal{N}(\sigma_S(A))$ such that $f$ is $0$ in an axially symmetric neighborhood of $0$ and $f(q) = q^{-1}$ in an axially symmetric neighborhood of $\sigma_S(A)\setminus\{0\}$. By Theorem \ref{3t2}, $A^D=f(A)$. Let $k\in\mathbb{N}$ and $g_k\in \mathcal{N}(\mathbb{H})$ such that $g_k(q)=q^k$ if $q\in \mathbb{H}$. Clearly $f\circ g_k=g_k\circ f$ for all $k\in\mathbb{N}$, $f\circ f=g_2\circ f$, $f\circ f\circ f=f$ and $f(f\circ f)=g_1f$. The above assertions are easily verified by using the previous equalities, Theorem \ref{3t7} and Theorem \ref{3t9}.
\end{proof}
\begin{theorem}
Let $A, B\in\mathcal{B}_R(X)$ be commuting elements such that $A^d$ and $B^d$ exist. Then
$(AB)^d$ exists and
$$(AB)^d=A^dB^d.$$
\end{theorem}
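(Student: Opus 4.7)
The plan is to verify the three relations of Definition \ref{3d1} directly for the pair $(AB, A^d B^d)$, concentrating all the real content in the preliminary step that $A$, $B$, $A^d$ and $B^d$ commute pairwise.

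For the pairwise commutativity I would use the Drazin decomposition supplied by Corollary \ref{3c1}: write $A = A_1 \oplus A_2$ on $X = X_1 \oplus X_2$, where $A_1$ is invertible on $X_1$ and $A_2$ is nilpotent of index $i(A)$ on $X_2$, with $A^d = A_1^{-1} \oplus 0$. A direct inspection identifies $X_1 = \mathcal{R}(A^{i(A)})$ and $X_2 = \mathcal{N}(A^{i(A)})$. Since $B$ commutes with $A$ it commutes with $A^{i(A)}$, and therefore it leaves both $\mathcal{R}(A^{i(A)})$ and $\mathcal{N}(A^{i(A)})$ invariant; hence $B$ is block-diagonal, $B = B_1 \oplus B_2$, with respect to the decomposition. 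On $X_1$ the relation $A_1 B_1 = B_1 A_1$ inverts to $A_1^{-1} B_1 = B_1 A_1^{-1}$, while on $X_2$ the operator $A^d$ acts as $0$, so $B A^d = A^d B$. By symmetry $A B^d = B^d A$, and running the same argument for the commuting pair $(B, A^d)$ in place of $(A, B)$ gives $A^d B^d = B^d A^d$.

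Once pairwise commutativity is in hand, the three identities of Definition \ref{3d1} for $(AB, A^d B^d)$ are formal. The commutation $(AB)(A^d B^d) = (A^d B^d)(AB)$ and the equation $(AB)(A^d B^d)^2 = A^d B^d$ both drop out after rearranging factors and using $A(A^d)^2 = A^d$ together with $B(B^d)^2 = B^d$. For the index relation, take $k := \max\{i(A), i(B)\}$; iterating $A^{i(A)+1} A^d = A^{i(A)}$ by right-multiplication by $A$ gives $A^{n+1} A^d = A^n$ for every $n \ge i(A)$, and similarly for $B$, so
\[
(AB)^{k+1}(A^d B^d) \;=\; (A^{k+1} A^d)(B^{k+1} B^d) \;=\; A^k B^k \;=\; (AB)^k.
\]
Hence $(AB)^d$ exists and equals $A^d B^d$, with the extra information that $i(AB) \le \max\{i(A), i(B)\}$.

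The only nontrivial step is verifying that $B$ respects the Drazin splitting of $A$. I would do this purely algebraically, from the invariance of $\mathcal{R}(A^{i(A)})$ and $\mathcal{N}(A^{i(A)})$ under any operator commuting with $A^{i(A)}$; this route keeps the argument elementary and sidesteps the functional-calculus representation of $A^d$ and the attendant bookkeeping of left versus right quaternionic multiplication inside the $S$-resolvent integral.
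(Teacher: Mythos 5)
Your proof is correct, but it establishes the key fact --- that $A$, $B$, $A^d$, $B^d$ commute pairwise --- by a genuinely different route from the paper. The paper simply invokes Drazin's original ring-theoretic result (\cite[Theorem 1]{33}), which says that in any associative ring the Drazin inverse $a^d$ lies in the double commutant of $a$, i.e.\ commutes with every element commuting with $a$; since $\mathcal{B}_R(X)$ is an associative ring, that result applies verbatim and the rest is the same formal verification of the three identities of Definition~\ref{3d1} that you carry out. You instead derive the commutativity spatially, from the Drazin splitting $X = \mathcal{R}(A^{i(A)}) \oplus \mathcal{N}(A^{i(A)})$ of Corollary~\ref{3c1}: any operator commuting with $A$ preserves both summands, hence is block-diagonal, and commutation with $A_1^{-1} \oplus 0$ follows block by block. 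Your argument is self-contained within the paper (it uses only results already proved there, rather than an external citation) and is correctly insulated from quaternionic subtleties, since it only ever composes right linear operators; the price is that it is tied to the operator setting, whereas the paper's citation yields the statement in any two-sided quaternionic Banach algebra at no extra cost --- which matters, since the paper goes on to state the Drazin inverse in that generality. Your bookkeeping of the index, giving $i(AB) \le \max\{i(A), i(B)\}$, is a small bonus not made explicit in the paper. One presentational nit: when you apply ``the same argument to the commuting pair $(B, A^d)$'' to get $A^d B^d = B^d A^d$, it is worth saying explicitly that you are reusing the general implication ``$C$ commutes with $B$ $\Rightarrow$ $C$ commutes with $B^d$'' with $C = A^d$, which you have just shown commutes with $B$; as written the reader has to supply that logical step.
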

\begin{proof}
By \cite[Theorem 1]{33}, $A,B,A^d$ and $B^d$ commute mutually, then the result follows by Definition \ref{3d1}.
\end{proof}
Given a right linear operator $T$ on $X$. $T$ is said to have finite ascent if there is an integer $k$ such that $\mathcal{N}(T^k) =\mathcal{N}(T^{k+1})$, the smallest such positive integer $k$ is called the \textit{ascent} of $T$ and denoted by $a(T)$. If there is no such integer we set $a(T):= \infty$. Analogously, $T$ is said to have finite descent if there is an integer $k$ such that $T^{k+1}(X) = T^k(X)$, the smallest such positive integer $k$ is called the \textit{descent} of $T$ and denoted by $d(T)$. If there is no such integer we set $d(T) :=  \infty$.\\

As in the complex case, we have the following result.
\begin{theorem}
Suppose that $T$ is a right linear operator on $X$ and let $k\in\mathbb{N}$. Then $ a(T) = d(T) \leq k$ if and only if, we have the decomposition
$$X =\mathcal{R}(T^k)\oplus \mathcal{N}(T^k).$$
\end{theorem}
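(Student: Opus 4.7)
The plan is to adapt the classical Banach-space argument to the quaternionic setting; the proof is purely algebraic on the $T$-invariant subspaces $\mathcal{R}(T^k)$ and $\mathcal{N}(T^k)$ and uses nothing specific about the scalar field. The two basic ingredients are the $T$-invariance of both subspaces, and the stabilization of the associated chains: $a(T)\le k$ forces $\mathcal{N}(T^j)=\mathcal{N}(T^k)$ for all $j\ge k$, and $d(T)\le k$ forces $\mathcal{R}(T^j)=\mathcal{R}(T^k)$ for all $j\ge k$, both by short inductions.

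For the forward implication, under $a(T)=d(T)\le k$, I would prove the intersection is trivial by taking $x=T^ky\in\mathcal{N}(T^k)$, deducing $T^{2k}y=0$, and invoking $\mathcal{N}(T^{2k})=\mathcal{N}(T^k)$ to conclude $x=T^ky=0$. For the sum, given $x\in X$ I would use $\mathcal{R}(T^k)=\mathcal{R}(T^{2k})$ to write $T^kx=T^{2k}z$ and then split $x=T^kz+(x-T^kz)$; the first summand lies in $\mathcal{R}(T^k)$, and the second in $\mathcal{N}(T^k)$ since $T^k(x-T^kz)=T^kx-T^{2k}z=0$.

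For the converse, assume $X=\mathcal{R}(T^k)\oplus\mathcal{N}(T^k)$. To establish $a(T)\le k$, I would take $u\in\mathcal{N}(T^{k+1})$ and decompose $u=v+w$ with $v=T^ks\in\mathcal{R}(T^k)$ and $w\in\mathcal{N}(T^k)$; then $Tv=T^{k+1}s\in\mathcal{R}(T^k)$ while $T^{k+1}v=T^{k+1}(u-w)=0$, so $Tv\in\mathcal{R}(T^k)\cap\mathcal{N}(T^k)=\{0\}$, which forces $v\in\mathcal{N}(T)\cap\mathcal{R}(T^k)\subseteq\mathcal{N}(T^k)\cap\mathcal{R}(T^k)=\{0\}$, giving $u=w\in\mathcal{N}(T^k)$. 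To establish $d(T)\le k$, I would take $y=T^kx$, decompose $x=T^ks+z$ with $z\in\mathcal{N}(T^k)$, and observe $y=T^{2k}s=T^{k+1}(T^{k-1}s)\in\mathcal{R}(T^{k+1})$.

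No step is a genuine obstacle and no quaternion-specific machinery is needed. I would only flag two minor points: the index $k=0$ must be excluded, because the decomposition then becomes trivial while $a(T)=d(T)=0$ would force $T$ to be invertible, so ``$k\in\mathbb{N}$'' has to be read as $k\ge 1$; and if ``decomposition'' is interpreted to include closedness of both summands, the forward direction should be supplemented by the standard argument producing a bounded projection onto $\mathcal{N}(T^k)$ along $\mathcal{R}(T^k)$ from closedness of $\mathcal{N}(T^k)$ together with the stabilization of the range chain.
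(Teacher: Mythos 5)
Your forward direction and the two bounds $a(T)\le k$ and $d(T)\le k$ in the converse coincide, step for step, with the paper's own argument (your route to $a(T)\le k$ via the decomposition $u=v+w$ is slightly more roundabout than the paper's direct observation that $T^ku\in\mathcal{R}(T^k)\cap\mathcal{N}(T^k)$, but it is correct). Your two caveats are also well taken: the paper's descent argument uses $\mathcal{R}(T^{2k})\subseteq\mathcal{R}(T^{k+1})$, which likewise presupposes $k\ge 1$, and since $T$ is only assumed right linear (not bounded), the decomposition is indeed purely algebraic, so no closedness issue arises.

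There is, however, a genuine gap in your converse: the conclusion to be proved is $a(T)=d(T)\le k$, and you only establish $a(T)\le k$ and $d(T)\le k$, never the equality $a(T)=d(T)$. This equality does not follow formally from the two bounds without a further argument, and the paper devotes the second half of its converse to it: writing $p=a(T)$ and $q=d(T)$ (both now known to be at most $k$, so that $X=\mathcal{R}(T^q)\oplus\mathcal{N}(T^p)$ by stabilization of the two chains), it shows that if $p\le q$ then every $u=T^pv\in\mathcal{R}(T^p)$ can be rewritten, via $v=T^qw+n$ with $T^pn=0$, as $u=T^{p+q}w\in\mathcal{R}(T^{p+1})$, forcing $q\le p$; and symmetrically, if $q\le p$ then $T^qu\in\mathcal{R}(T^q)\cap\mathcal{N}(T)=\{0\}$ for $u\in\mathcal{N}(T^{q+1})$, forcing $p\le q$. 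You should either reproduce this argument or explicitly invoke the classical fact that an operator with finite ascent and finite descent has them equal; as written, your proof only yields $\max\{a(T),d(T)\}\le k$.
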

\begin{proof}
Let $ k=a(T) = d(T)$, then $\mathcal{R}(T^k)\cap \mathcal{N}(T^k)=\{0\}$. Indeed, let $u\in\mathcal{R}(T^k)\cap \mathcal{N}(T^k)$, then there is a vector $v\in X$ such that $u=T^kv$, hence $T^ku=T^{2k}v=0$. Since $\mathcal{N}(T^{2k})=\mathcal{N}(T^k)$, $u=T^kv=0$. On the other hand, we have $\mathcal{R}(T^k)+ \mathcal{N}(T^k)=X$. Indeed, let $u\in X$, since $\mathcal{R}(T^k)=\mathcal{R}(T^{2k})$, there is a vector $v\in X$ such that $T^ku=T^{2k}v$, hence $u=T^{k}v+u-T^kv$. Thus $X =\mathcal{R}(T^k)\oplus \mathcal{N}(T^k).$\\
Conversely, let $u\in\mathcal{N}(T^{k+1})$. Since $\mathcal{R}(T^k)\cap \mathcal{N}(T^k)=\{0\}$, $T^ku=0$. Hence $\mathcal{N}(T^{k+1})=\mathcal{N}(T^{k})$. On the other hand, let $u\in\mathcal{R}(T^{k})$, then  there is a vector $v\in X$ such that $u=T^kv$. Since $v\in\mathcal{R}(T^k)+ \mathcal{N}(T^k)$, $u=T^kv\in\mathcal{R}(T^{k+1})$. Hence $\mathcal{R}(T^{k})=\mathcal{R}(T^{k+1})$. Let now $p=a(T)$ and $q=d(T)$, we can suppose that $p>0$ and $q>0$, assume that $p\leq q$, let $u\in\mathcal{R}(T^{p})$, then  there is a vector $v\in X$ such that $u=T^pv$. Since $v\in\mathcal{R}(T^q)\oplus \mathcal{N}(T^p)$, $u=T^pv\in\mathcal{R}(T^{p+q})\subseteq\mathcal{R}(T^{p+1})$.
Thus $ p=q\leq k$. Assume that $q\leq p$, let $u\in\mathcal{N}(T^{q+1})$, then $T(T^{q}u)=0$, that is $T^{q}u\in\mathcal{R}(T^{q})\cap \mathcal{N}(T)$. Since $X=\mathcal{R}(T^q)\oplus \mathcal{N}(T^p)$, $T^{q}u=0$, and so $u\in\mathcal{N}(T^{q})$. Thus $ p=q\leq k$. Hence $a(T)=d(T)\leq k$.
\end{proof}
\begin{theorem}
An operator $A$ in $\mathcal{B}_R(X)$ has a Drazin inverse if and only if it has finite ascent and descent. In such a case, the Drazin index of $A$ is equal to the common value of $a(A)$ and $d(A)$.
\end{theorem}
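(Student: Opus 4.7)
The plan is to reduce both directions to the decomposition theorem of Corollary \ref{3c1} and the ascent/descent decomposition just established (the previous theorem), using bookkeeping on $\mathcal{R}(A^n)$ and $\mathcal{N}(A^n)$ under a block form $A = A_1 \oplus A_2$.

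For the forward direction, assume $A$ is Drazin invertible with Drazin index $k := i(A)$. By Corollary \ref{3c1}, write $X = X_1 \oplus X_2$ with $A = A_1 \oplus A_2$, where $A_1 \in \mathcal{B}_R(X_1)$ is invertible and $A_2 \in \mathcal{B}_R(X_2)$ is nilpotent with nilpotency index exactly $k$. Then for every $n \geq 0$,
\begin{equation*}
\mathcal{N}(A^n) = \{0\} \oplus \mathcal{N}(A_2^n), \qquad \mathcal{R}(A^n) = X_1 \oplus \mathcal{R}(A_2^n),
\end{equation*}
because $A_1^n$ is invertible. Since $A_2^n = 0$ iff $n \geq k$, the chain $\mathcal{N}(A^n)$ stabilizes precisely at $n = k$, and likewise $\mathcal{R}(A^n)$ stabilizes precisely at $n = k$. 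Hence $a(A) = d(A) = k = i(A)$.

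For the converse, assume $a(A) = d(A) = k < \infty$. By the previous theorem, $X = \mathcal{R}(A^k) \oplus \mathcal{N}(A^k)$, and both summands are closed. Both are $A$-invariant: $A\mathcal{R}(A^k) \subseteq \mathcal{R}(A^k)$ is trivial, and $A\mathcal{N}(A^k) \subseteq \mathcal{N}(A^k)$ because $A^k(Au) = A(A^k u) = 0$ whenever $A^k u = 0$. Write $A = A_1 \oplus A_2$ on $X_1 \oplus X_2 := \mathcal{R}(A^k) \oplus \mathcal{N}(A^k)$. By construction $A_2^k = 0$, so $A_2$ is nilpotent with nilpotency index at most $k$. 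For $A_1$, note that $\mathcal{R}(A_1) = A(\mathcal{R}(A^k)) = \mathcal{R}(A^{k+1}) = \mathcal{R}(A^k) = X_1$ by the descent hypothesis, and if $u \in X_1$ with $A_1 u = 0$ then $u \in \mathcal{R}(A^k) \cap \mathcal{N}(A) \subseteq \mathcal{R}(A^k) \cap \mathcal{N}(A^k) = \{0\}$. Thus $A_1$ is a bounded bijection on the quaternionic Banach space $X_1$, and by the open mapping theorem $A_1^{-1} \in \mathcal{B}_R(X_1)$. Corollary \ref{3c1} then gives that $A$ is Drazin invertible with $i(A)$ equal to the nilpotency index of $A_2$, hence $i(A) \leq k$.

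Combining the two directions, whenever either side is finite both are, and $i(A) \leq k = a(A) = d(A)$; while the forward direction applied to the Drazin inverse now produced gives $a(A) = d(A) = i(A)$, so the common value is exactly $i(A)$. The only step that requires care in the quaternionic setting is the appeal to the open mapping theorem to conclude that $A_1^{-1}$ is bounded on $X_1$; this is standard for quaternionic Banach spaces, and all remaining steps are algebraic manipulations with the direct sum decomposition.
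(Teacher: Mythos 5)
Your proof is correct and follows essentially the same route as the paper: both directions reduce to Corollary \ref{3c1} together with the preceding theorem characterizing $a(T)=d(T)\leq k$ by the decomposition $X=\mathcal{R}(T^k)\oplus\mathcal{N}(T^k)$, and you merely supply the bookkeeping (stabilization of the kernel and range chains, invariance of the summands, invertibility of $A_1$) that the paper leaves implicit. The one assertion you state without justification — that $\mathcal{R}(A^k)$ is closed, which is needed both to apply Corollary \ref{3c1} and to invoke the open mapping theorem for $A_1^{-1}$ — is also passed over silently in the paper; it is true, but requires its own open-mapping argument (apply the open mapping theorem to the bounded surjection $X\times\mathcal{N}(A^k)\ni(u,v)\mapsto A^ku+v\in X$ and use $\mathcal{R}(A^k)\cap\mathcal{N}(A^k)=\{0\}$ to absorb the $v$-component when approximating a limit of points of $\mathcal{R}(A^k)$).
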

\begin{proof}
By Corollary \ref{3c1}, $A$ is Drazin invertible if and only if $A=A_1\oplus A_2$ with $A_1$ is invertible and $A_2$ is nilpotent. Let $k$ be the nilpotency index of $A_2$, then $k$ is the least integer such that $X=\mathcal{R}(T^k)\oplus \mathcal{N}(T^k)$, hence $a(A)=d(A)=k$. By Corollary \ref{3c1}, again, $k=i(A)$, thus $a(A)=d(A)=i(A)$.
\end{proof}
\begin{definition}
A two-sided quaternionic Banach algebra is a two-sided quaternionic Banach space $\mathcal{A}$ that is endowed with a product $\mathcal{A}\times\mathcal{A}\rightarrow\mathcal{A}$ such that:
\begin{itemize}
\item[(i)] The product is associative and distributive over the sum in $\mathcal{A}$;
\item[(ii)] one has $(qx)y = q(xy)$ and $x(yq) = (xy)q$ for all $x, y \in\mathcal{A}$ and all $q\in\mathbb{H}$;
\item[(iii)] one has $\Vert xy\Vert\leq \Vert x\Vert\Vert y\Vert$ for all $x, y \in\mathcal{A}$.
\end{itemize} 
If in addition there exists $e\in\mathcal{A}$ such that $ex=xe=x$ for all $x\in\mathcal{A}$, then $\mathcal{A}$ is called a two-sided quaternionic Banach algebra with unit.
\end{definition}
One can prove that $\mathcal{B}_R(X)$ and $\mathcal{B}_L(X)$ are two-sided quaternionic Banach algebras with unit.
\begin{definition}
Let $\mathcal{A}$ be a two-sided quaternionic Banach algebra and $a\in\mathcal{A}$. An element $b\in\mathcal{A}$ is a Drazin inverse of $a$, written $b = a^d$, if
\begin{equation}
 ab=ba\text{, } ab^2=b\text{, } a^{k+1}b=a^k,
\end{equation}
for some nonnegative integer $k$. The least nonnegative integer $k$ for which these equations hold is the Drazin index $i(a)$ of $a$.
\end{definition}
Let $\mathcal{A}$ be a two-sided quaternionic Banach algebra and $a\in\mathcal{A}$. For any $a\in\mathcal{A}$ we define the left multiplication of $a$ by $L_a(b) = ab$, for all $b\in\mathcal{A}$. Then $L_a\in\mathcal{B}_R(\mathcal{A})$, we have $\Vert L_a\Vert=\Vert a\Vert$.
\begin{theorem}
\label{3t10}
Let $\mathcal{A}$ be a two-sided quaternionic Banach algebra and $a\in\mathcal{A}$ with unit. Then $a$ is Drazin invertible if and only if $L_a$ is Drazin invertible. In such a case, $L_a^d=L_{a^d}$ and $i(L_a)=i(a)$.
\end{theorem}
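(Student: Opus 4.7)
The plan is to exploit the injective algebra homomorphism $L\colon \mathcal{A} \to \mathcal{B}_R(\mathcal{A})$, $a \mapsto L_a$. Associativity of the product in $\mathcal{A}$ gives $L_a L_b = L_{ab}$; evaluation at the unit $e$ shows $L$ is injective; and the isometry $\Vert L_a\Vert = \Vert a\Vert$ makes $L(\mathcal{A})$ a closed subalgebra of $\mathcal{B}_R(\mathcal{A})$. For the forward implication, I would apply $L$ to the three Drazin axioms for $a,a^d$ with $k=i(a)$, turning $aa^d = a^d a$, $a(a^d)^2 = a^d$, $a^{k+1}a^d = a^k$ into $L_aL_{a^d}=L_{a^d}L_a$, $L_a L_{a^d}^2 = L_{a^d}$, $L_a^{k+1}L_{a^d}=L_a^k$. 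Hence $L_{a^d}$ is a Drazin inverse of $L_a$; uniqueness yields $L_a^d = L_{a^d}$, and $i(L_a) \leq k = i(a)$.

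For the converse, assume $L_a$ is Drazin invertible and set $m := i(L_a)$. The core task is to produce $b \in \mathcal{A}$ with $L_a^d = L_b$; once this is done, evaluating the three Drazin identities for $L_a$ and $L_b$ at $e$ translates them into $ab=ba$, $ab^2=b$, $a^{m+1}b=a^m$, so $b$ is a Drazin inverse of $a$ with $i(a) \leq m$. For this I will invoke Theorem \ref{3t2}: since $L_a$ is in particular generalized Drazin invertible, $L_a^d = f(L_a)$ where $f \in \mathcal{N}(\sigma_S(L_a))$ is $0$ on an axially symmetric neighborhood of $0$ and $q \mapsto q^{-1}$ on one of $\sigma_S(L_a)\setminus\{0\}$.

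The key technical step is to show that the S-functional calculus for $L_a$ factors through $L$. I would first verify $Q_q(L_a) = L_{Q_q(a)}$ and check that $L_{Q_q(a)}$ is invertible in $\mathcal{B}_R(\mathcal{A})$ iff $Q_q(a)$ is invertible in $\mathcal{A}$ (the nontrivial direction uses injectivity of $L_{Q_q(a)}$ to upgrade a right inverse of $Q_q(a)$ to a two-sided inverse). This yields $\sigma_S(a) = \sigma_S(L_a)$ and $L_{Q_s(a)}^{-1} = L_{Q_s(a)^{-1}}$ for $s \in \rho_S(L_a)$, whence $S_L^{-1}(s, L_a) = L_{S_L^{-1}(s,a)}$. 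Combining this with the identity $L_c q = L_{cq}$ (which follows from the convention $(Tq)(u) = T(qu)$) and pulling the continuous linear map $L$ through the contour integral gives
\[
L_a^d = f(L_a) = \frac{1}{2\pi}\int_{\partial(U\cap\mathbb{C}_j)} L_{S_L^{-1}(s,a)}\, ds_j\, f(s) = L_b,
\]
where $b := \frac{1}{2\pi}\int_{\partial(U\cap\mathbb{C}_j)} S_L^{-1}(s,a)\, ds_j\, f(s) \in \mathcal{A}$. Combining with the forward direction then yields $i(a) = i(L_a)$ and $L_a^d = L_b = L_{a^d}$.

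The main obstacle is realizing $L_a^d$ as a left multiplication by an element of $\mathcal{A}$; a priori it could sit elsewhere in $\mathcal{B}_R(\mathcal{A})$. The bypass rests on the algebra identity $L_{Q_q(a)} = Q_q(L_a)$ combined with the closedness (by isometry) of $L(\mathcal{A})$, which together let us move $L$ outside the S-functional calculus contour integral.
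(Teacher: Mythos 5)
Your forward direction coincides with the paper's: apply the homomorphism $a\mapsto L_a$ to the three Drazin identities and use uniqueness. For the converse you take a genuinely different route. The paper stays entirely algebraic: it sets $b=L_a^d(e)$, evaluates $L_a^{k+1}L_a^d=L_a^k$ at the unit to get $a^{k+1}b=a^k$, hence $L_a^{k+1}L_b=L_a^k$, and then invokes Drazin's Theorem~4 from \cite{33} to conclude $L_a^d=L_a^kL_b^{k+1}=L_{a^kb^{k+1}}$, so that $c=a^kb^{k+1}$ is the sought Drazin inverse of $a$ after evaluating at $e$. You instead realize $L_a^d$ as $f(L_a)$ via Theorem~\ref{3t2} and show that the S-functional calculus factors through $L$, using the spectral permanence $\sigma_S(a)=\sigma_S(L_a)$, the kernel identity $S_L^{-1}(s,L_a)=L_{S_L^{-1}(s,a)}$, and continuity of the isometry $L$ to move it outside the contour integral. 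Both arguments are sound; the paper's is shorter and purely ring-theoretic (it never needs a spectral theory for elements of $\mathcal{A}$), while yours is heavier but produces the independently useful facts $\sigma_S(a)=\sigma_S(L_a)$ and $f(L_a)=L_{f(a)}$.

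Three points in your version need care. First, pulling $L$ through the integral requires $L_cq=L_{cq}$, i.e.\ $(cq)u=c(qu)$ for $q\in\mathbb{H}$; this mixed associativity is not among the axioms the paper lists for a two-sided quaternionic Banach algebra (only $(qx)y=q(xy)$ and $x(yq)=(xy)q$ are assumed), so it should be added as a hypothesis or verified for the algebra at hand --- the paper's route needs only $L_aL_b=L_{ab}$ and avoids this issue. Second, Theorem~\ref{3t2} is stated for $0$ an isolated point of the S-spectrum, so the case where $L_a$ is invertible must be treated separately (there $L_a^{-1}=L_{a^{-1}}$ by the same right-inverse upgrade you use for $Q_q$). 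Third, you implicitly use uniqueness of the Drazin inverse in $\mathcal{A}$ to identify $b$ with $a^d$; the paper's uniqueness corollary is stated only for $\mathcal{B}_R(X)$, so you should appeal to Drazin's purely algebraic uniqueness theorem instead. All of these are fixable, so the proposal stands as a valid alternative proof.
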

\begin{proof}
Let $a\in\mathcal{A}$ such that $a$ is Drazin invertible. For every $b\in\mathcal{A}$, we have $L_aL_b=L_{ab}$, hence it is easy to check that $L_{a^d}=L_a^d$ and then $i(L_a)\leq i(a)$.\\
Conversely, assume that $L_a$ is Drazin invertible and let $b=L_a^d(e)$. Since $L_a^{k+1}L_a^d=L_a^k$, $a^{k+1}b=a^k$. Hence $L_a^{k+1}L_b=L_a^k=L_a^dL_a^{k+1}$, then by \cite[Theorem 4]{33} and its proof, $L_a^d=L_a^{k}L_b^{k+1}=L_{a^{k}b^{k+1}}$. Let $c=a^{k}b^{k+1}$, then $L_aL_c=L_cL_a\text{, } L_aL_c^2=L_c\text{, } L_a^{k+1}L_c=L_a^k$, hence $ac=ca\text{, } ac^2=c\text{, } a^{k+1}c=a^k$. Thus $a$ is Drazin invertible and then $i(a)\leq i(L_a)$.
\end{proof}


\begin{thebibliography}{16}
\bibitem{31}
F. Colombo, J. Gantner, D.P. Kimsey, Spectral theory on the S-spectrum for quaternionic operators, Operator Theory: Advances and Applications, 270, Springer Basel, 2019.
\bibitem{32}
F. Colombo, I. Sabadini, D.C. Struppa, Noncommutative Functional Calculus, Birkh\"auser Basel, 2011.
\bibitem{33}
M.P. Drazin, Pseudo-inverses in associative rings and semigroups, Amer. Math. Monthly 65 (1958) 506–514.
\bibitem{34}
R. Ghiloni, V. Moretti, A. Perotti, Continuous slice functional calculus in quaternionic Hilbert spaces, Rev. Math. Phys. 25 (2013) 1350006, 83 pp.
\bibitem{36}
J.J. Koliha,  A generalized Drazin inverse, Glasgow Math. J. 38 (1996) 367–381.
\bibitem{37}
B. Muraleetharan, K. Thirulogasanthar, Weyl and Browder S-spectra in a right quaternionic Hilbert space, J. Geom. Phys. 135 (2019) 7–20.
\end{thebibliography}
\end{document}